\newtheorem{lemma}{Lemma}[section]
\newtheorem{proposition}[lemma]{Proposition}
\newtheorem{theorem}{Theorem}
\newtheorem{corollary}[lemma]{Corollary}
\newtheorem{predf}[lemma]{Definition} 
\newenvironment{df}{\begin{predf}\rm}{\end{predf}}
\newtheorem{preremark}[lemma]{Remark}  
\newenvironment{remark}{\begin{preremark}\rm}{\end{preremark}}
\newtheorem{preremark0}[lemma]{Remark}  
\newtheorem*{prenotation}{Notation}
\numberwithin{equation}{section}
\newcommand\mnote[1]{}
\newcommand\mmnote[1]{}
\newcommand{\bR}{\mathbb{R}}
\newcommand\lra{\longrightarrow}
\newcommand\Emb{\mathrm{Emb}}
\newcommand\Th{\mathrm{Th}}
\newcommand{\map}{\mathrm{map}}
\def\Id{\mathrm{Id}}
\renewcommand{\geq}{\geqslant}
\renewcommand{\leq}{\leqslant}
\def\g{\mathrm{Gauss}} 
\def\gauss{\mathrm{Gauss}}
\def\CL{\mathrm{CL}}
\def\graph{{\rm graph}}
\def\vol{\mathrm{vol}}
\def\graphvol{\graph\circ\nu}
\def\dvol{d_{\nu}}
\def\dgrascomp{d}
\def\dpsi{d_\psi}
\def\ls{\mathrm{ls}}
\def\gs{\mathrm{gs}}
\def\Gr{\mathrm{Gr}}
\def\Pr{\widetilde{\Psi}}
\def\fib{\mathrm{fib}}
\def\GL{\mathrm{GL}}
\def\s{\mathfrak{s}}
\def\scan{\mathcal{S}}
\def\st{\scan}
\def\Man{\mathbf{Man}}
\def\Top{\mathbf{Top}}
\def\Set{\mathbf{Set}}
\def\OO{\mathbf{O}}
\def\ss{\mathrm{ss}}
\def\ms{\mathrm{ms}}
\title{A metric for the space of submanifolds of Galatius and Randal-Williams}
\author{Federico Cantero Mor\'an}
\thanks{The author was funded by Michael Weiss' Humboldt professor grant and the Belgian Interuniversity Attraction Pole P07/18. He was partially supported by the Spanish Ministry of Economy and Competitiveness under grant MTM2013-42178-P}
\email{\texttt{federico.cantero@uclouvain.be}}
\address{{\normalfont IRMP, Chemin du cyclotron 2, 1348 Louvain-la-neuve, Belgique}}
\subjclass[2010]{54B20, 54E35, 57R}
\keywords{Submanifolds, hyperspaces, Hausdorff distance, scanning map}
\begin{document}
\begin{abstract}
Galatius and Randal-Williams defined a topology on the set of closed submanifolds of $\bR^n$ in \cite{GR-W}. B\"okstedt and Madsen in \cite{Bokstedt-Madsen} proved that a $C^1$ version of this topology is metrizable by showing that it is regular and second countable. Using that the scanning map of a topological sheaf on manifolds is an embedding, we give an explicit metric to the space considered by B\"okstedt and Madsen. Then, we compare this topology with the Fell topology and we use the Hausdorff distance to give another metric to the space of Galatius and Randal-Williams. 

\end{abstract}
\maketitle

\section{Introduction}
Let $U\subset \bR^n$ be an open subset and define $\psi(U)$ to be the set of all (possibly empty, possiby non-compact) proper submanifolds of $U$ of dimension $d$ without boundary. Here, a \emph{proper subset of $\bR^n$} is a subset of $\bR^n$ whose intersection with any compact subset is compact. A subset of $\bR^n$ is proper if and only if it is closed.
There is a surjective map
\[\coprod_{[N]} \Emb(N,U)\lra \psi(U)\]
that sends each embedding to its image (the union being indexed over diffeomorphism classes of manifolds of dimension $d$). Each choice of topology on the left hand-side (for instance, the Whitney $C^\infty$ topology or the $C^1$ topology) determines a quotient topology on the right hand-side. 

A drawback of all these topologies in $\psi(U)$ is that, while the assigment $U\mapsto \psi(U)$
is a sheaf of sets in $\bR^n$ (because the property of being a submanifold can be checked locally), the restriction maps of this sheaf are not continuous with respect to these topologies.

In order to make $U\mapsto \psi(U)$ a sheaf of topological spaces, Galatius and Randal-Williams \cite{GR-W} introduced a new topology on $\psi(U)$ which is reasonably close to the one coming from the Whitney $C^\infty$ topology in $\Emb(M,U)$. For example, both topologies coincide on the subset of $\psi(U)$ that consists of compact submanifolds. B\"okstedt and Madsen \cite{Bokstedt-Madsen} later proved that the $C^1$-version of the Galatius--Randal-Williams topology is metrizable. We recall now the definition of this topology: If $NW$ denotes the normal bundle of a submanifold $W$ of $U$, the exponential map is a partially defined function $\exp_W\colon NW\dasharrow U$
defined in a neighbourhood of the zero section $z$. Recall also the bundle projection 
\[\xymatrix{
\tau\colon TNW\ar[r]\ar[d] & TW\oplus NW\ar[r]\ar[d] & NW\ar[d] \\
NW\ar[r] & W\ar[r] & W.
}\]

\begin{df}\label{df:1} The \emph{($C^1$)-Galatius--Randal-Williams topology} on $\psi(U)$ is given by the following neighbourhood basis of any proper submanifold $W$:
\begin{itemize}
\item 
Every compact subset $K\subset U$ and every $\epsilon>0$ define a basic neighbourhood $(K,\epsilon)^{\gs}$ of $W$; a submanifold $W'$ belongs to $(K,\epsilon)^{\gs}$ if there is a section $f$ of the normal bundle $NW\to W$ such that 
\begin{enumerate}
\item\label{it:str} $\exp_W(f(W))\cap K = W'\cap K$ and 
\item\label{it:der} $\|f(x)\| + \|\tau\circ (Df)(x)\| < \epsilon$ for all $x\in W$ such that $\exp_W\circ f(x)\in W'\cap K$.
\end{enumerate}
\end{itemize}
\end{df}

We write $\Psi(U)$ for this topological space. Let $\Th^\fib(\gamma_d^\perp(T\bR^n))$ be the fibrewise one-point compactification of the fibrewise Grassmannian of affine $d$-planes in the tangent bundle of $\bR^n$. This is a bundle over $\bR^n$, and we write $\Gamma(\Th^\fib(\gamma_d^\perp(T\bR^n)))$ for its space of sections with the compact-open topology. Using a construction in \cite{cantero:merging}, we have a map
\[\st\colon \Psi(U)\lra \Gamma(\Th^\fib(\gamma_d^\perp(T\bR^n)))\]
and our first result shows that it is an embedding. As the right hand-side is a metric space, we have the first contribution of this note (Theorem \ref{thm:scan}), which gives a more conceptual proof of the forementioned result of B\"okstedt and Madsen: $\Psi(U)$ is a metric space, and an explicit metric is given by the restriction of the metric on $\Gamma(\Th^\fib(\gamma_d^\perp(T\bR^n)))$ along $\st$. We also generalise this result to spaces of submanifolds with labels in a topological space or in an abelian topological monoid (Theorem \ref{thm:scan2}).

There is another natural metric on $\psi(U)$: Consider the inclusion
\[\gauss\colon \psi(U)\lra \CL(U\times \Gr_d(\bR^n))\]
into the set of closed subsets of $U\times \Gr_d(\bR^n)$ that takes a submanifold to its tangent bundle. Endow the right hand-side with the Fell topology, which is induced by the Hausdorff metric on the set of closed subsets of the one-point compactification of $\bR^n\times \Gr_d(\bR^n)$. Finally, write $\widetilde{\Psi}(U)$ for $\psi(U)$ endowed with the subspace metric. In Theorem \ref{thm:ls}, we characterise this space and we show that the Fell topology on $\psi(U)$ is strictly coarser than the Galatius--Randal-Williams topology.

The metric obtained in Theorem \ref{thm:scan} is not easy to work with, because the map $\st$ involves several choices. We end this note with Theorem \ref{thm:gs}, giving a variation of the Hausdorff metric that induces the Galatius--Randal-Williams topology on $\psi(U)$.

Theorem~3.22 in \cite{GR-W} shows that when $\psi(\bR^n)$ is endowed with the Galatius--Randal-Williams topology, it has the weak homotopy type of $\Th(\gamma_d^\perp(\bR^n))$, whereas in \cite{cantero:merging} the author has proven that $\psi(\bR^n)$ with the induced Fell topology is weakly contractible. 

In Section \ref{s:2} we prove Theorems \ref{thm:scan} and \ref{thm:scan2}, in Section \ref{s:3} we give a discussion on topologies on spaces of closed subsets and state Theorems \ref{thm:ls} and \ref{thm:gs}, which are proven in Sections \ref{s:4} and \ref{s:5}. 

\subsection*{Acknowledgements} The author thanks Carles Casacuberta and Oscar Randal-Williams for comments on earlier versions of the paper, and he is specially grateful to Abd\'o Roig and his remarks.

\section{Metrics induced by the scanning map}\label{s:2}

Recall the site $\Man$ of $n$-dimensional smooth manifolds and open embeddings between them, and write $\Top$ for the category of topological spaces. Let $M$ be one such manifold, and let $g\colon M\to (0,\infty)$ be an injectivity radius for $M$ (i.e., the exponential map $\exp\colon T_pM\to M$ is injective on vectors of length at most $g(p)$). If $\Phi\colon \Man\lra \Top$ is a continuous functor, then the scanning map
\[\s\colon \Phi(M)\lra \Gamma(\Phi^\fib(TM))\]
is the adjoint of the map
\[\Phi(M)\times M\lra \Phi^\fib(TM)\]
that sends a pair $(x,p)$ to $\theta_{g(p)}^{-1}(\Phi(\exp_p)(x))$, where $\theta_t$ is a $\GL_n$-equivariant diffeomorphism from $\bR^n$ to the ball of radius $t$ in $\bR^n$. 

The assigment $U\mapsto \psi(U)$ defines a sheaf of sets on the site $\OO(\bR^n)$ of open subsets of $\bR^n$: if $U\subset V$, there is a function $\psi(V)\to \psi(U)$ given by sending each submanifold $W$ of $V$ to the submanifold $W\cap U$. In Section 2.2 of \cite{GR-W}, it is proven that the assignment $U\mapsto \Psi(U)$ is a topological sheaf.

Define $\psi(M)$ to be the set of all (possibly empty) proper $d$-dimensional submanifolds of $M$. This extends the sheaf of sets $\psi(-)$ from the site $\OO(\bR^n)$ of open subsets of $\bR^n$ to the site $\Man$. Consider now the diagram:
\[\xymatrix{
\OO(\bR^n)\ar[d]\ar[r]^{\Psi(-)} & \Top \ar[d] \\
\Man\ar[r]^{\psi(-)} & \Set
}\]
As explained in \cite[Theorem~3.3]{R-WEmbedded}, there is a unique lift
\[\Psi\colon \Man\lra \Top\]
 of $\psi(-)$, i.e., a topological sheaf on the site $\Man$ with underlying set-valued sheaf $\psi(-)$, whose restriction to $\OO(\bR^n)$ is $\Psi(-)$.

%
%
%
%

\begin{proposition} If $\Phi\colon \Man\to \Top$ is a topological sheaf, then the scanning map is an embedding. 
\end{proposition}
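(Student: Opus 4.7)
The plan is to factor $\s$ through the product of point-evaluations $\prod_{p\in M}\Phi^\fib(TM)_p$, reduce the embedding question to the sheaf condition on $\Phi$, and conclude with a general topological fact: if $g\circ f$ is an embedding and $g$ is continuous, then $f$ is an embedding.

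First, I will verify that $\s$ is continuous. Since $M$ is locally compact Hausdorff, the exponential law makes this equivalent to continuity of the unadjoint
\[\Phi(M)\times M\lra \Phi^\fib(TM),\qquad (x,p)\longmapsto \theta_{g(p)}^{-1}\bigl(\Phi(\exp_p)(x)\bigr),\]
which follows from the continuity of the functor $\Phi$ together with the smooth dependence of $\exp_p$ and $\theta_{g(p)}$ on $p$.

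Next, set $B_p:=B_{g(p)}(T_pM)$, $\phi_p:=\exp_p\colon B_p\hookrightarrow M$, and $U_p:=\phi_p(B_p)$. The family $\{U_p\}_{p\in M}$ is an open cover of $M$, and the sheaf condition for $\Phi$ in $\Top$ yields that the restriction map
\[\rho\colon \Phi(M)\lra \prod_{p\in M}\Phi(B_p),\qquad x\longmapsto \bigl(\Phi(\phi_p)(x)\bigr)_p\]
is a topological embedding (it is a leg of an equalizer in $\Top$). Let $\pi\colon \Gamma(\Phi^\fib(TM))\to \prod_p \Phi^\fib(TM)_p$ be the map whose $p$-th coordinate is evaluation at $p$; each $\mathrm{ev}_p$ is continuous in the compact-open topology (because $\{p\}$ is compact), so $\pi$ is continuous. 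By the very definition of $\s$,
\[\pi\circ\s \;=\; \Bigl(\prod_p \Phi(\theta_{g(p)}^{-1})\Bigr)\circ \rho,\]
and since each $\theta_{g(p)}$ is a diffeomorphism the outer factor is a homeomorphism of products. Hence $\pi\circ\s$ is itself a topological embedding.

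With $\s$ continuous, $\pi$ continuous, and $\pi\circ\s$ an embedding, the general topological fact quoted above yields that $\s$ is an embedding. The main obstacle I anticipate is bookkeeping: one has to confirm that the sheaf condition for $\Phi$ is read in $\Top$ (so $\rho$ is a topological embedding, not merely a set-theoretic injection), and that the identifications $T_pM\cong\bR^n$ implicit in $\theta_{g(p)}$ assemble, across all $p$, into an honest homeomorphism of the product. Both should reduce to routine unpacking of the definitions.
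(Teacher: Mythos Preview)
Your argument is correct, and it takes a genuinely different route from the paper's.

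The paper first reduces to the case $M=\bR^n$, using that $\s$ is a natural transformation between topological sheaves, and then argues by contradiction: if $\s$ were merely an injective continuous map, it would factor through a strictly coarser topology $\Phi'(\bR^n)$, and adjointing back would give a factorisation of the translation map $\Phi(\bR^n)\times\bR^n\to\Phi(\bR^n)$ through $\Phi'(\bR^n)\times\bR^n$; restricting to $p=0$ shows the identity factors through $\Phi'(\bR^n)$, a contradiction. Stripped of the contrapositive phrasing, the point is simply that evaluation at the origin is a continuous retraction of $\s$ on $\bR^n$.

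You instead stay on $M$ and invoke the sheaf axiom for the cover $\{U_p\}_{p\in M}$ directly, identifying $\pi\circ\s$ with the equalizer leg $\rho$ up to a product of homeomorphisms, and then cancel. This avoids the reduction step entirely; in exchange you use the sheaf condition for a large (uncountable) cover and the product of \emph{all} point-evaluations rather than a single one. Your approach has the minor conceptual advantage that only the source $\Phi$ needs to be a sheaf in $\Top$---the target $\Gamma(\Phi^\fib(TM))$ only enters through continuity of evaluations---whereas the paper's reduction lemma is phrased for natural transformations between two sheaves. The bookkeeping worries you flag at the end are indeed routine: equalizers in $\Top$ are subspaces, and a product of homeomorphisms is a homeomorphism.
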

\begin{proof}
As the scanning map is a natural transformation between topological sheaves, it is an embedding if and only if its value on $\bR^n$ is an embedding. In this case, the scanning map is the adjoint of
\[\lambda\colon \Phi(\bR^n)\times \bR^n\lra \Psi(\bR^n),\]
where $\lambda_p$ is the translation that sends the origin to $p$ and $\lambda(x,p) = \lambda_p^{-1}(x)$. Now observe the following:

If $X\to \map(Y,Z)$ is injective, but not an embedding, then there is a coarser topology $X'$ on $X$ and a factorisation $X\to X'\to \map(Y,Z)$, and so a factorisation of its adjoint
\[X\times Y\lra X'\times Y\lra Z\]
where the first map is the identity on points.

Hence, if the scanning map were not an embedding, because $\bR^n$ is Hausdorff locally compact \cite[Theorem~46.11]{Munkres}, there would be a factorisation
\[\Phi(\bR^n)\times \bR^n\lra \Phi'(\bR^n)\times \bR^n\lra \Phi(\bR^n).\]
Its restriction is the identity
\[\Phi(\bR^n)\times \{0\}\lra \Phi'(\bR^n)\times \{0\}\lra \Phi(\bR^n),\]
and therefore $\Phi(\bR^n)\cong \Phi'(\bR^n)$, contradicting the hypothesis of the scanning map not being an embedding.
\end{proof}
The following is straightforward:
\begin{lemma} If $\Phi\colon \Man\to \Top$ is a topological sheaf and $\Lambda$ is a topological functor from the groupoid of vector spaces and isomorphisms, and $\sigma\colon \Phi(\bR^n)\lra \Lambda(\bR^n)$ is a $\GL_n$-equivariant quotient map such that the composition
\[\scan = \sigma\circ \s\colon \Phi(\bR^n)\lra \Gamma(\Phi^\fib(T\bR^n))\lra \Gamma(\Lambda^\fib(T\bR^n))\]
is injective, then the composition is an embedding.
\end{lemma}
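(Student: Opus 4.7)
My plan is to mirror the proof of the preceding proposition. Suppose for contradiction that $\scan$ is injective but not an embedding; then the coarsest topology on the underlying set of $\Phi(\bR^n)$ making $\scan$ continuous---call the resulting space $\Phi'(\bR^n)$---is strictly coarser than the given one, and $\scan$ factors as
\[
\Phi(\bR^n) \xrightarrow{\id} \Phi'(\bR^n) \hookrightarrow \Gamma(\Lambda^\fib(T\bR^n)),
\]
the second arrow being an embedding. Since $\bR^n$ is locally compact Hausdorff, passing to adjoints yields
\[
\Phi(\bR^n)\times\bR^n \to \Phi'(\bR^n)\times\bR^n \to \Lambda^\fib(T\bR^n),
\]
whose composition is the adjoint $(x,p)\mapsto \sigma(\Phi(\lambda_p^{-1})(x))$ of $\scan$.

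Restricting to each slice $\Phi(\bR^n)\times\{p\}$ gives a factorisation of $\sigma\circ\Phi(\lambda_p^{-1})\colon\Phi(\bR^n)\to\Lambda(\bR^n)$ through $\Phi'(\bR^n)$. Each such composition is a quotient map, being $\sigma$ post-composed with a self-homeomorphism of $\Phi(\bR^n)$. Consequently every translate $\Phi(\lambda_p)(\sigma^{-1}(V))$ of a $\sigma$-saturated open subset of $\Phi(\bR^n)$, for $V$ open in $\Lambda(\bR^n)$ and $p\in\bR^n$, is open in $\Phi'(\bR^n)$.

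The final step, which I expect to be the main obstacle, is to show that these translates generate the given topology $\tau$ of $\Phi(\bR^n)$. Once this is established, the inclusion $\tau \subseteq \tau'$ follows, and combined with the automatic $\tau' \subseteq \tau$ it forces $\Phi(\bR^n) = \Phi'(\bR^n)$, contradicting strict coarseness. I expect this generation claim to follow by combining the preceding proposition---which tells us $\s\colon \Phi(\bR^n)\to\Gamma(\Phi^\fib(T\bR^n))$ is already an embedding, so $\tau$ is generated by pullbacks under $\s$ of the subbasis of the compact-open topology---with the injectivity hypothesis on $\scan$, which prevents the fibrewise application of the quotient $\sigma$ from losing the separating information carried by those pullbacks. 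The argument should therefore reduce to a comparison of subbasic opens in $\Gamma(\Phi^\fib(T\bR^n))$ against those in $\Gamma(\Lambda^\fib(T\bR^n))$ mediated by $\sigma$.
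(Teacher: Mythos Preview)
The paper offers no proof of this lemma beyond the phrase ``The following is straightforward,'' so there is nothing to compare your argument against directly. Your outline through step~4 is sound and mirrors the proposition's proof faithfully: you correctly obtain that each map $\sigma\circ\Phi(\lambda_p^{-1})$ factors continuously through the coarser topology $\Phi'(\bR^n)$, and hence that every translate $\Phi(\lambda_p)(\sigma^{-1}(V))$ lies in $\tau'$.

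The gap is in step~5, and it is a real one rather than a routine verification. Your proposed ``generation claim''---that the translates $\Phi(\lambda_p)(\sigma^{-1}(V))$ generate the original topology $\tau$---is \emph{strictly stronger} than what you need. Those translates are exactly the preimages $\scan^{-1}[\{p\},V]$ of the point-open subbasis on $\Gamma(\Lambda^\fib(T\bR^n))$, so if they generated $\tau$ you would have shown that $\scan$ is an embedding even when the target carries the topology of pointwise convergence, not the compact-open topology. There is no reason to expect this from the hypotheses, and for $\Phi=\Psi$ it is implausible: the Galatius--Randal-Williams neighbourhoods $(K,\epsilon)^{\gs}$ genuinely depend on compact sets $K$, not on single points. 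Your sketch (``combine the proposition with injectivity of $\scan$'') does not address this mismatch between point-open and compact-open subbasic sets.

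What you actually need is the weaker inclusion $\tau\subset\tau'$, where $\tau'$ is the initial topology for $\scan$ with the \emph{compact-open} topology on the target. Since $\s$ is an embedding (by the proposition), this amounts to showing that every subbasic $\tau$-open $\s^{-1}[K,V']$ (with $V'\subset\Phi(\bR^n)$ open) lies in the topology generated by the sets $\s^{-1}[K,\sigma^{-1}(V)]=\scan^{-1}[K,V]$. Equivalently, one must argue that the $\sigma$-saturated open sets are rich enough to recover the compact-open initial topology along $\s$. This is where the quotient hypothesis on $\sigma$ and the injectivity of $\scan$ should enter together, and it deserves an explicit argument rather than an expectation.
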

Let us particularise taking $\Phi(V) = \Psi(V)$ and $\Lambda(V) = \Th(\gamma_d^\perp(V))$, the one-point compactification of the affine Grassmannian of $d$-planes in $V$. The space $\Lambda(V)$ includes $\GL_n$-equivariantly into the space $\Phi(V)$ as the subspace of possibly empty affine submanifolds. In \cite{cantero:merging} the author constructed a map
\[\Phi(V)\lra \Lambda(V)\]
which makes $\Lambda(V)$ a strong deformation retraction of $\Phi(V)$, and so this map is a right inverse of the inclusion, and therefore a quotient map. This map is easily seen to be $\GL_n$-equivariant. Finally, the composition
\[\st\colon\Phi(\bR^n)\lra \Gamma(\Phi^\fib(T\bR^n))\lra \Gamma(\Lambda^\fib(T\bR^n))\]
is injective, because if $W\in \Phi(\bR^n)$, then $\st(W)^{-1}(\Gr_d^\fib(T\bR^n)) = W$, and so $\st(W)\neq \st(W')$ if $W\neq W'$. Therefore, we have that:
\begin{theorem}\label{thm:scan} The map $\st$ is an embedding, and therefore $\Psi(M)$ is metrizable, and an explicit metric is the pullback of any metric on $\Gamma(\Lambda^\fib(T\bR^n))$.
\end{theorem}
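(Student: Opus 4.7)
\medskip

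\noindent\textbf{Proof plan.} The strategy is to combine the Proposition and the Lemma above applied to the specific choices $\Phi = \Psi$ and $\Lambda(V) = \Th(\gamma_d^\perp(V))$, and then to extend from $\bR^n$ to an arbitrary manifold $M$ by sheaf arguments. First, I would invoke the Proposition for the topological sheaf $\Psi\colon \Man\to \Top$ (which exists and restricts to the Galatius--Randal-Williams sheaf on $\OO(\bR^n)$ by the lift discussed earlier), giving that the scanning map $\s\colon \Psi(M)\to \Gamma(\Psi^\fib(TM))$ is an embedding. I would then verify the three hypotheses of the Lemma for the retraction map $\sigma\colon \Psi(V)\to \Th(\gamma_d^\perp(V))$ constructed in \cite{cantero:merging}.

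The verifications split as follows. The map $\sigma$ is $\GL_n$-equivariant: this was already noted in the paragraph preceding the theorem, and it follows from the explicit recipe in \cite{cantero:merging} since the deformation is defined in terms of natural affine-geometric operations that commute with linear isomorphisms. The map $\sigma$ is a quotient map: since $\sigma$ is a strong deformation retraction of $\Psi(V)$ onto the subspace $\Th(\gamma_d^\perp(V))$, it has a section (the inclusion) and is therefore a (split) surjection between spaces where the target is a retract of the source, hence a quotient map. Finally, the composition $\st = \sigma\circ \s$ is injective: if $W\neq W'$ are two elements of $\Psi(\bR^n)$, pick a point $p$ lying in $W\mathbin{\triangle} W'$ (i.e., in the symmetric difference); then the germ of $W$ at $p$ and the germ of $W'$ at $p$ disagree on whether $p$ lies in the submanifold, so $\st(W)(p)$ and $\st(W')(p)$ have different preimage in $\Gr_d^\fib(T\bR^n)$, as already noted via the identity $\st(W)^{-1}(\Gr_d^\fib(T\bR^n)) = W$.

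With the hypotheses verified, the Lemma applies and yields that $\st\colon \Psi(\bR^n)\to \Gamma(\Lambda^\fib(T\bR^n))$ is an embedding. To pass from $\bR^n$ to an arbitrary $n$-manifold $M$, I would use that $\st$ is a natural transformation between topological sheaves on $\Man$: being an embedding is a local property in the source and can be checked on the restriction to a chart, so the statement on $M$ reduces to the statement on $\bR^n$ by covering $M$ by charts and using the sheaf condition on both sides.

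The final step is the metrizability conclusion: the target $\Gamma(\Lambda^\fib(T\bR^n))$ (respectively over $TM$) is a space of sections of a bundle whose fibre $\Lambda(\bR^n) = \Th(\gamma_d^\perp(\bR^n))$ is compact metrizable, over a paracompact metrizable base, with the compact-open topology; such a section space is metrizable (one may take a complete metric inducing the compact-open topology by exhausting the base by compacts and summing). Pulling back any such metric along the embedding $\st$ gives an explicit metric on $\Psi(M)$. The main obstacle I expect is not conceptual but bookkeeping: checking that the retraction from \cite{cantero:merging} really is a quotient (hence that the factorisation trick in the Lemma goes through) and, more delicate, verifying that the proof of the Proposition genuinely uses only local compactness and Hausdorffness of $\bR^n$ so that the factorisation-of-adjoints argument applies verbatim; once these are in hand, the theorem is a formal consequence of the Proposition, the Lemma, and injectivity of $\st$.
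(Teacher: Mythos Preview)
Your proposal is correct and follows essentially the same route as the paper: apply the Proposition to $\Phi=\Psi$, verify the three hypotheses of the Lemma for the retraction $\sigma$ of \cite{cantero:merging} (equivariance, quotient, and injectivity via $\st(W)^{-1}(\Gr_d^\fib(T\bR^n))=W$), and conclude. Your extra step of extending from $\bR^n$ to general $M$ via the sheaf property, and your explicit remarks on why $\Gamma(\Lambda^\fib(TM))$ is metrizable, fill in details that the paper leaves implicit (note the paper's statement mixes $M$ and $T\bR^n$), but the core argument is identical.
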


\subsection*{Other metrizable spaces of submanifolds} Let $\widetilde{\psi}(U;X)$ be the set of pairs $(W,\alpha)$ where $W\in \psi(U)$ and $\alpha$ is a continuous function from $W$ to a metric abelian topological monoid $X$.

\begin{df}\label{df:2} The topology on $\widetilde{\Psi}(U;X)$ is given by the following neighbourhood basis of any pair $(W,\alpha)$:
Every compact subset $K\subset U$ and every $\epsilon>0$ define a basic neighbourhood $(K,\epsilon)^{\ms}$ of $(W,\alpha)$; a pair $(W',\alpha')$ belongs to $(K,\epsilon)^{\ms}$ if there is a subset $Q\subset NW$ such that the composite $q\colon Q\subset NW\to W$
\begin{enumerate}
\item $\exp_W(Q)\cap K = W'\cap K$,
\item $\|f(x)\| + \|\tau\circ (Df)(x)\| < \epsilon$ for each local section $f$ of $q$.
\item 
\[d\left(\alpha(x),\sum_{y\in q^{-1}(x)}\alpha'(y)\right)<\epsilon\text{ for all }x\in W\cap K,\]
\end{enumerate}
\end{df}
Let $\psi(U;X)$ be the set of pairs $(W,\alpha)$ where $W\in \psi(U)$ and $\alpha$ is a continuous function from $W$ to a topological space $X$.

\begin{df}\label{df:3} The topology on $\Psi(U;X)$ is given by the following neighbourhood basis of any pair $(W,\alpha)$: 
Every compact subset $K\subset U$, every $\epsilon>0$ and every neighbourhood $A$ of $\alpha\in \map(W,X)$ define a basic neighbourhood $(K,\epsilon,A)^{\ss}$ of $(W,\alpha)$; a pair $(W',\alpha')$ belongs to $(K,\epsilon)^{\ss}$ if there is a section $f$ of the normal bundle $NW\to W$ such that:
\begin{enumerate}
\item $\exp_W(Q)\cap K = W'\cap K$,
\item $\|f(x)\| + \|\tau\circ (Df)(x)\| < \epsilon$ for all $x\in W$ such that $\exp_W\circ f\in W'\cap K$.
\item $\alpha'\circ \exp_W\circ f\in A$,
\end{enumerate}
\end{df}

%
These assigments define topological sheaves on the site of open subsets of $\bR^n$ (see \cite[Lemma~3.2]{cantero:merging} for $\widetilde{\Psi}(\bR^n,X)$, whose proof also works for $\Psi(\bR^n,X)$), which, as in the beginning of this section, extend to the site of $n$-dimensional manifolds and open embeddings. The main result of \cite{cantero:merging} generalises to give $\GL_n$-equivariant right inverses of the inclusions
\begin{align*}
\widetilde{\Lambda}(\bR^n,X)&\hookrightarrow \widetilde{\Psi}(\bR^n,X)\\
\Lambda(\bR^n,X)&\hookrightarrow \Psi(\bR^n,X)
\end{align*}
of the subspace of those $(W,\alpha)$ where $W$ is a (possibly empty) union of parallel planes in $\bR^n$ and $\alpha$ is locally constant. These subspaces are easily seen to be metrizable if $X$ is metrizable, therefore
\begin{theorem}\label{thm:scan2} If $X$ is metrizable, then the spaces $\widetilde{\Psi}(M,X)$ and $\Psi(M,X)$ are metrizable, and an explicit metric is obtained pulling back the metric of $\Gamma(\widetilde{\Lambda}(-,X)^\fib(TM))$ and $\Gamma(\Lambda(-,X)^\fib(TM))$ along $\scan$.
\end{theorem}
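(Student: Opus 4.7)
The plan is to repeat the argument of Theorem \ref{thm:scan} with $\Phi$ replaced by $\widetilde{\Psi}(-,X)$ or $\Psi(-,X)$, and $\Lambda$ replaced by $\widetilde{\Lambda}(-,X)$ or $\Lambda(-,X)$. I would invoke the lemma preceding Theorem \ref{thm:scan}, whose hypotheses are: that $\Phi$ is a topological sheaf on $\Man$, that there is a $\GL_n$-equivariant quotient map $\sigma\colon \Phi(\bR^n)\to \Lambda(\bR^n)$, and that the composition $\scan = \sigma\circ \s$ is injective.

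First, the sheaf property on $\OO(\bR^n)$ is given by \cite[Lemma~3.2]{cantero:merging} in the monoid-labelled case, and the same proof works for $\Psi(-,X)$; the extension to $\Man$ proceeds via \cite[Theorem~3.3]{R-WEmbedded} exactly as in the unlabelled case. Next, the $\GL_n$-equivariant deformation retractions onto the subspace of unions of parallel affine planes, provided by the main result of \cite{cantero:merging} and quoted in the paragraph before the theorem, are right inverses of the subspace inclusions and hence quotient maps. Injectivity of $\scan$ follows as in Theorem \ref{thm:scan}: one recovers $W\in \psi(\bR^n)$ from $\scan(W,\alpha)^{-1}(\Gr_d^\fib(T\bR^n))$, and the value of $\alpha$ at a point $p\in W$ is encoded in the label component of the section at $p$.

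It then remains to check that $\Gamma(\widetilde{\Lambda}(-,X)^\fib(TM))$ and $\Gamma(\Lambda(-,X)^\fib(TM))$ are metrizable whenever $X$ is. The fibre $\widetilde{\Lambda}(\bR^n,X)$ fibres over $\Gr_d(\bR^n)$ by the common direction of its parallel planes, and the fibre over a direction is the one-point compactification of a labelled configuration space of points in the orthogonal complement with labels in $X$, which is metrizable by standard constructions. An analogous description works for $\Lambda(\bR^n,X)$ once one replaces locally constant labels by continuous ones. With metrics on these fibres in hand, the space of sections of the resulting bundle with the compact-open topology over the second countable manifold $M$ is itself metrizable by the Urysohn-style argument used in \cite{Bokstedt-Madsen}. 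The required metrics on $\Psi(M,X)$ and $\widetilde{\Psi}(M,X)$ are then the pullbacks of these metrics under $\scan$.

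The main obstacle I expect is the metrizability check for $\Lambda(\bR^n,X)$ in the non-abelian setting: one has to verify that the topology coming from the quotient map $\sigma$ coincides with the natural topology built from the Grassmannian direction together with the mapping space $\map(W,X)$, so that the fibrewise compactification is well-behaved. In the abelian case this is easier because the locally constant labels reduce to labelled configurations, and the classical metrics on those are available.
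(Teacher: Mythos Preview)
Your proposal is correct and follows essentially the same route as the paper: the paper's ``proof'' is the paragraph preceding the theorem, which assembles exactly the ingredients you list (sheaf property via \cite[Lemma~3.2]{cantero:merging}, extension to $\Man$, the $\GL_n$-equivariant retractions from \cite{cantero:merging} as quotient maps, and metrizability of the target $\Lambda$-spaces), and then invokes the lemma before Theorem~\ref{thm:scan}. You supply more detail than the paper on the injectivity of $\scan$ and on why the section spaces are metrizable, both of which the paper leaves implicit. One small correction: in the paper both $\widetilde{\Lambda}(\bR^n,X)$ and $\Lambda(\bR^n,X)$ are defined with \emph{locally constant} labels on unions of parallel planes, so your proposed distinction (continuous labels for $\Lambda$) is not the one the paper makes; this does not affect your metrizability argument.
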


\section{Topologies and distances on spaces of closed subsets}\label{s:3}
We start introducing two topologies in the set $\CL(X)$ of closed subsets of a space $X$. If $U\subset X$, define
\[U^-= \{A\in \CL(X)\mid A\cap U\neq \emptyset\}, \quad U^+=\{A\in \CL(X)\mid A\subset U\}.\]
Then the \emph{Fell topology} has as subbasis the collection of all subsets $U^-$ with $U$ an open subset of $X$ and $U^+$ with $U$ the complement of a compact subset of $X$ \cite{Fell}. The \emph{finite topology} has as subbasis the collection of all subsets $U^-$ and $U^+$ with $U$ an open subset \cite{Michael}. 

If $X$ is a uniformly hemicompact\footnote{It is a countable union $\bigcup_{k=1}^\infty H_k$ of compact subsets, where each subset $H_{k+1}$ contains a neighbourhood of $H_{k}$.} metric space, then the Fell topology on $\CL(X)$ is metrizable, and an explicit metric is the following: If $\overline{X}$ denotes the one-point compactification of $X$, then there is a map
\[\CL(X)\lra \CL(\overline{X})\]
that sends a closed subset $A$ to the closed subset $A\cup\{\infty\}$. This map is an embedding (it even admits a retraction). Since $X$ is metrizable and uniformly hemicompact, $\overline{X}$ is metrizable \cite{Mandelkern, metric-mathoverflow}. It is immediate to see that the Fell topology in a compact space agrees with the finite topology, which for compact metric spaces is induced by the Hausdorff distance \cite{Michael}.

\begin{df} The \emph{differential Fell topology} on $\psi(U)$ is the restriction of the Fell topology along the inclusion 
\[\gauss\colon \psi(U)\lra \CL(U\times \Gr_d(\bR^n)).\]
We let $\dgrascomp_H$ be the metric in $\psi(U)$ induced by the Hausdorff metric along the inclusion
\[\psi(U)\lra \CL(U\times \Gr_d(\bR^n))\lra \CL(\overline{U\times\Gr_d(\bR^n)}).\]
\end{df}

One might expect the distance $\dgrascomp_H$ to be a metric for $\Psi(U)$, but it is not. To show why (see also Figure \ref{figure:psi}), we take a compact submanifold $W$. A submanifold $W'$ is close to $W$ in $\Psi(U)$ if there is a \emph{global} section of the normal bundle of $W$ that is close to the zero section and $W' = f(W)$. On the other hand, $W'$ is close $W$ in $(\psi(U),\dgrascomp_H)$ if each point $(x,T_xW)$ is close to $\gauss(W')$ and each point $(y,T_yW')$ is close to $\gauss(W)$. Therefore, if we let $\delta>0$, we let $s$ be a section of $NW$, and we identify $NW$ with a tubular neighbourhood of $W$ in $U$, and let $W'_\delta = \delta\cdot s(W)\cup -\delta\cdot s(W)$, then $\dgrascomp_H(W,W')<\delta$, hence as $\delta\to 0$, the sequence of submanifolds $W'_\delta$ (which are diffeomorphic to two disjoint copies of $W$), converges to $W$. But this is not allowed in Definition \ref{df:1} ($W'_\delta$ is never the image of a global section). Nevertheless, we will prove that any $W'$ in a small neighbourhood of $W$ (with the differential Fell topology) is always \emph{locally} the image of some local sections.

\begin{figure}[b]
\centering
\subfloat[]{\label{gs}\includegraphics[width = 4cm]{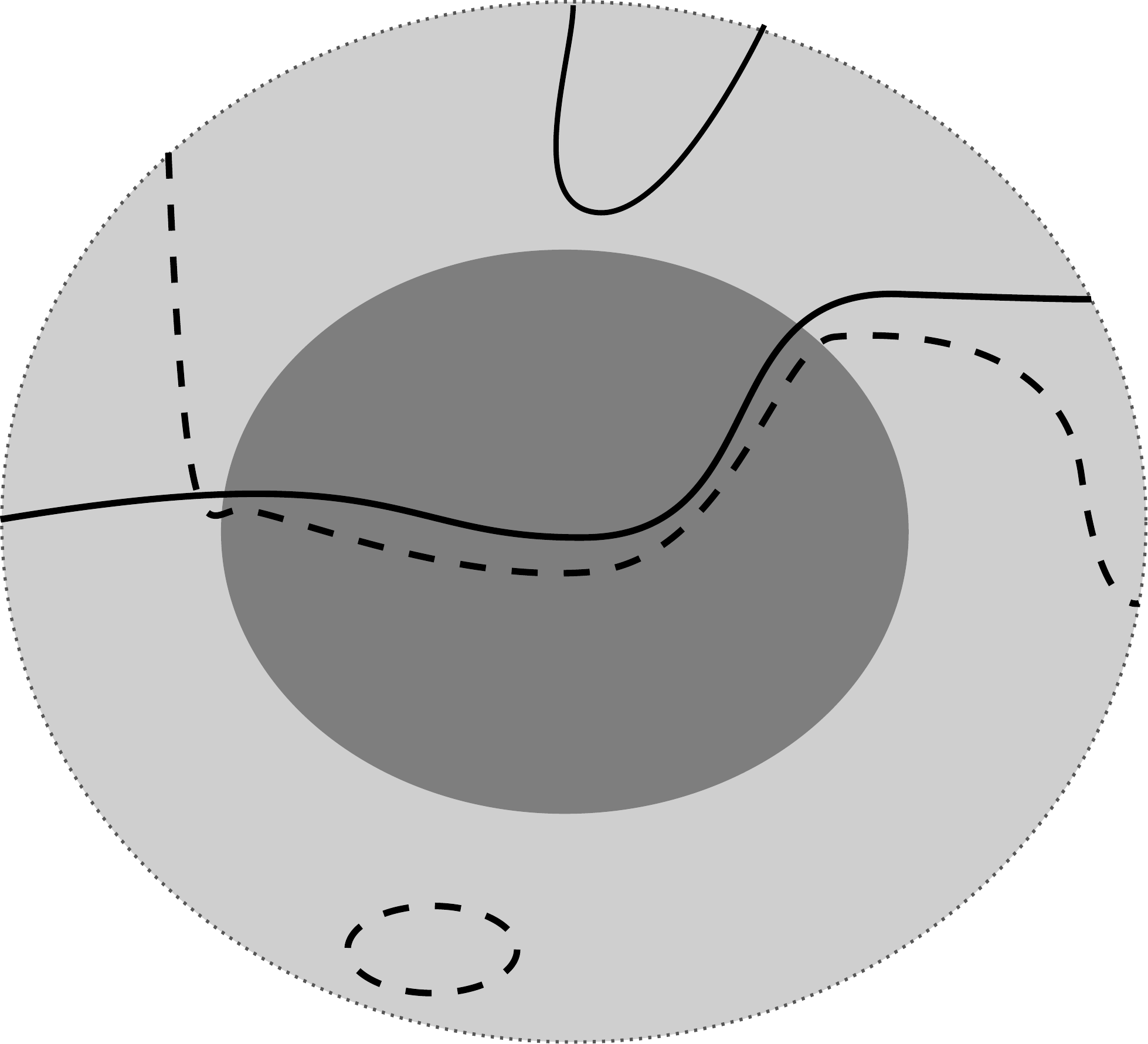}} \hspace{1cm}
\subfloat[]{\label{ls}\includegraphics[width = 4cm]{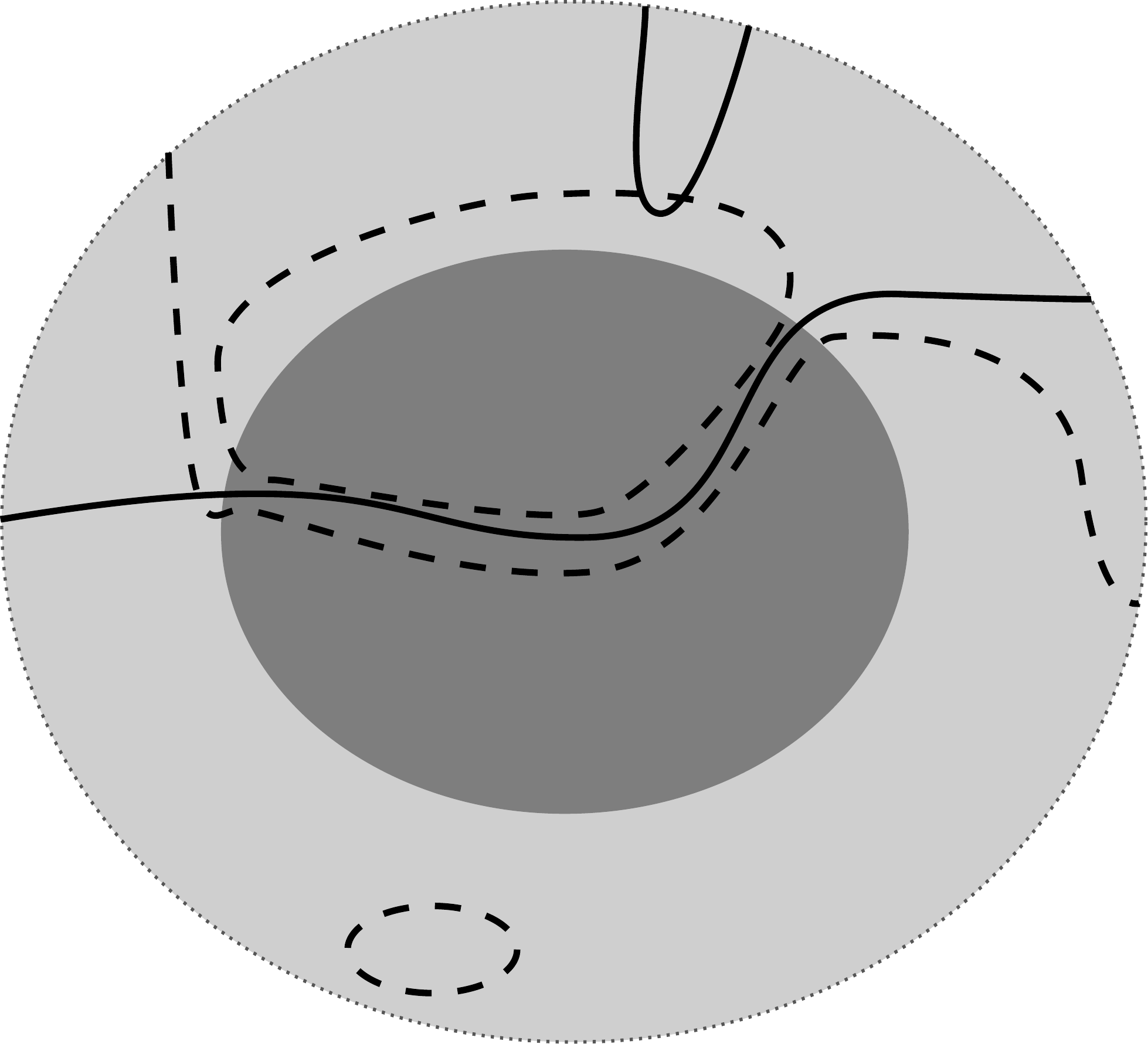}} \\
\subfloat[]{\label{lejos}\includegraphics[width = 4cm]{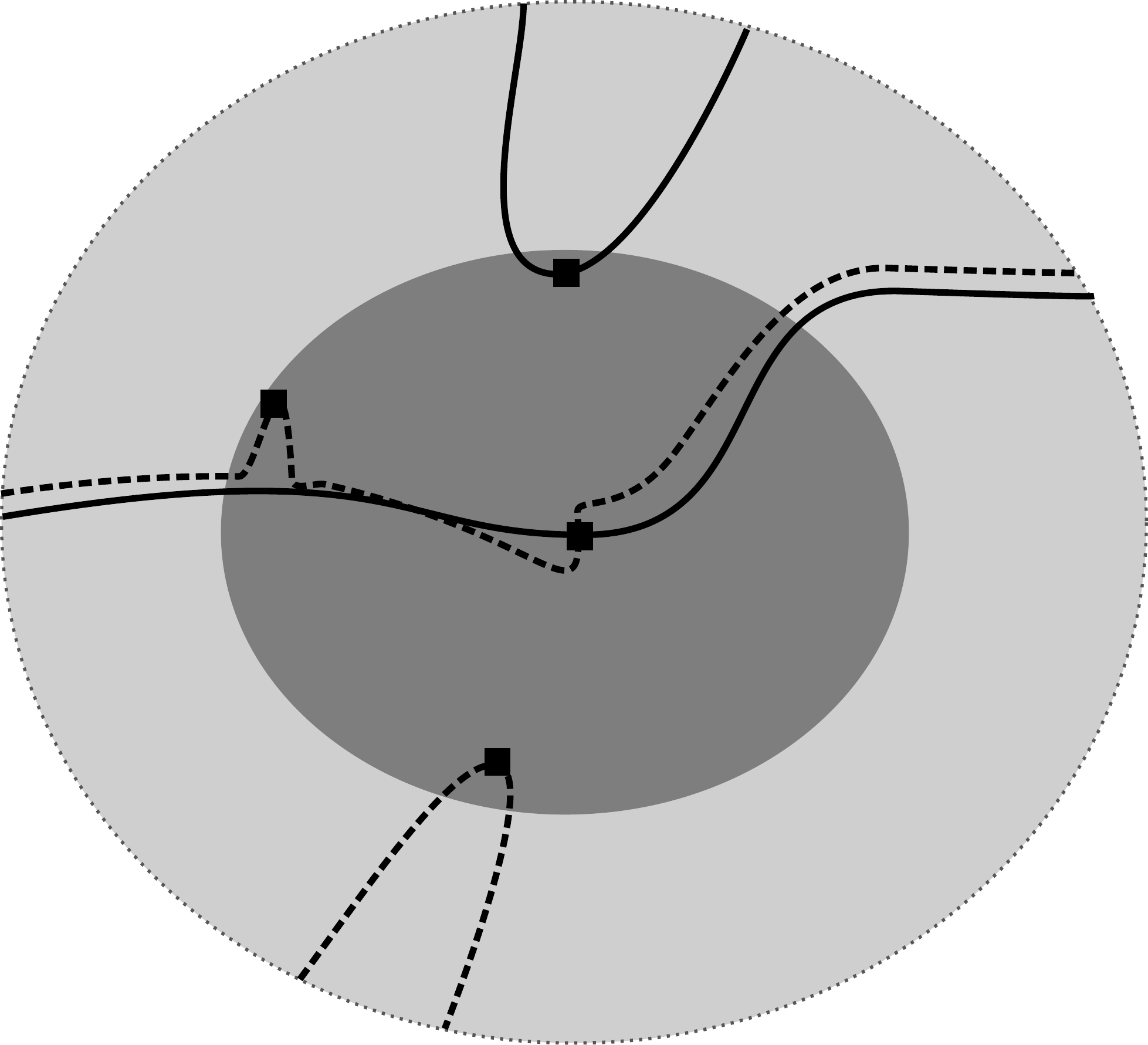}}\hspace{1cm}
\subfloat[]{\label{empty}\includegraphics[width = 4cm]{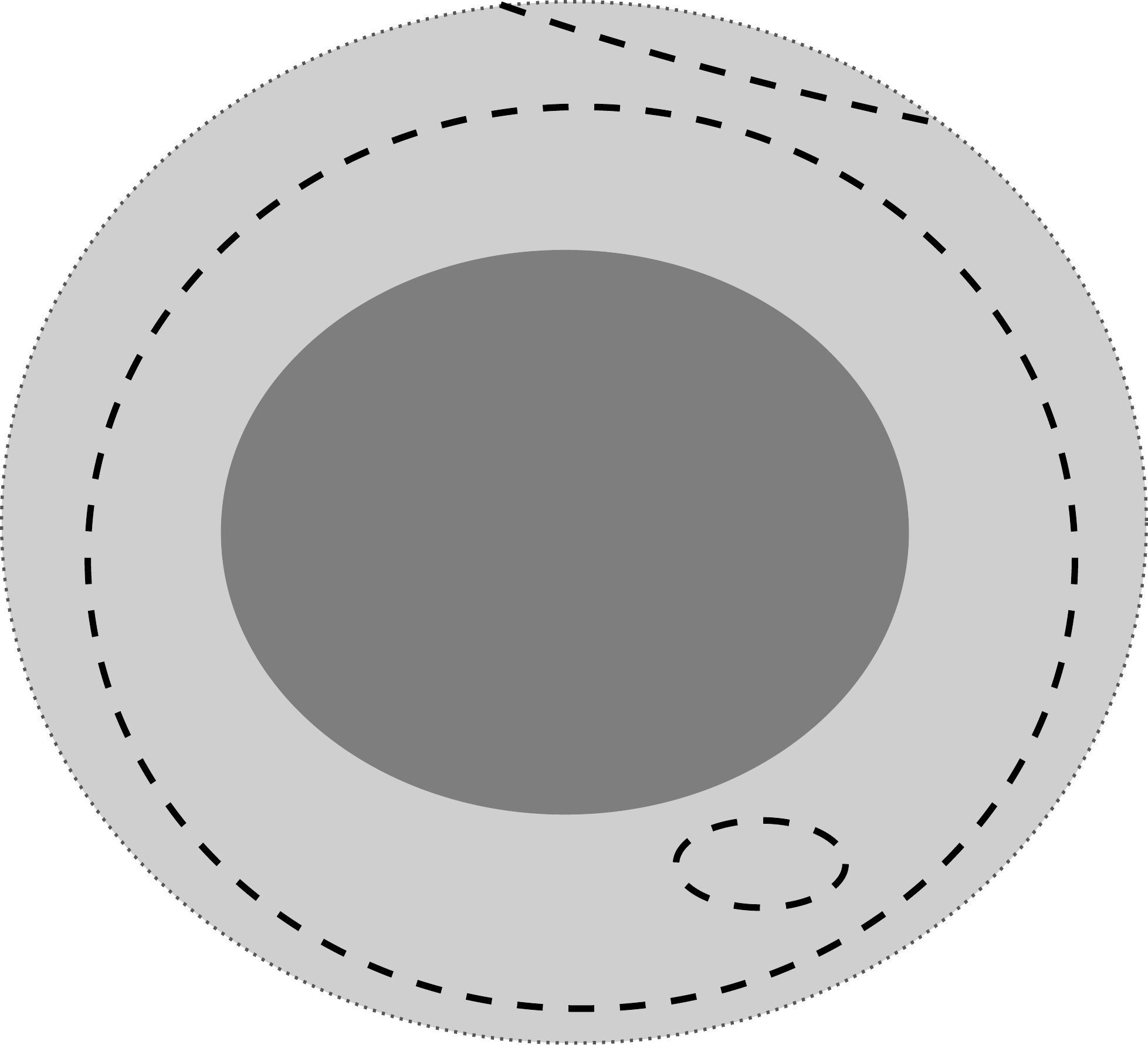}}
\caption{The light grey area is an open subset $U$ while the dark area is a compact subset $K\subset U$. We denote the dashed submanifold by $W'$ and the non-dashed submanifold by $W$. In Figure \ref{gs}, $W'$ is close to $W$ both in $\Psi(U)$ and $\Pr(U)$. In Figure \ref{ls}, $W'$ is close to $W$ in $\Pr$ but not in $\Psi(U)$. In Figure \ref{lejos}, $W'$ is far from $W$ in $\Pr(U)$, hence in $\Psi(U)$ too. The small squares indicate points that are far from the other submanifold. In Figure \ref{empty}, $W'$ is close to $\emptyset$.}
\label{figure:psi}
\end{figure}

In order to make this explicit, we introduce now a different topology in $\psi(U)$, the only difference being that instead of requiring $W'\cap K$ to be the image of a global section of $NW$, we only ask it to be the union of images of local sections of $NW$ whose domains cover $W$.

\begin{df}\label{def:ls} The space $\Pr(U)$ has the same underlying set as $\Psi(U)$, with neighbourhood basis of a proper submanifold $W$:
\begin{itemize}
\item 
Every compact subset $K\subset U$ and every $\epsilon>0$ define a basic neighbourhood $(K,\epsilon)^{\ls}$ of $W$; a submanifold $W'$ belongs to $(K,\epsilon)^{\ls}$ if there is a subset $Q\subset NW$ such that the composite $q\colon Q\subset NW\to W$
\begin{enumerate}
\item hits every point of $W\cap K$,
\item is a local diffeomorphism, i.e., a covering map,
\item\label{qui2} $\exp_W(Q)\cap K = W'\cap K$,
\item\label{bla2} $\|f(x)\| + \|\tau\circ (Df)(x)\| < \epsilon$ for each local section $f$ of the local diffeomorphism $q$.
\end{enumerate}
\end{itemize}
\end{df}

\begin{theorem}\label{thm:ls} The space $\Pr_d(U)$ has the differential Fell topology (which therefore is strictly coarser than the Galatius--Randal-Williams topology).
\end{theorem}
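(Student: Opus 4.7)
The plan is to prove both containments between the $\ls$-topology of Definition \ref{def:ls} and the differential Fell topology on $\psi(U)$, thereby identifying them. Fix $W \in \psi(U)$ and set $X := U \times \Gr_d(\bR^n)$. The key technical ingredient in both directions is a translation, carried out inside a tubular neighbourhood of $W$, between the pointwise proximity of $\gauss(W')$ to $\gauss(W)$ encoded by the Fell topology and the coherent local-section description of Definition \ref{def:ls}.

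For the direction that each basic $\ls$-neighbourhood $(K,\epsilon)^{\ls}$ of $W$ contains a Fell-basic open, first pick a compact neighbourhood $K^+ \subset U$ of $K$ together with a tubular neighbourhood $\cT \subset NW|_{W \cap K^+}$ on which $\exp_W$ is a diffeomorphism into $U$. For $\delta > 0$ to be fixed, let $T_\delta \subset X$ be the open $\delta$-neighbourhood of $\gauss(W)$ in a bounded metric on $X$; choose a $\delta$-dense finite net $\{x_i\} \subset W \cap K$ with small open balls $V_i \subset X$ around the points $(x_i, T_{x_i} W)$; and set $K' := (K \times \Gr_d(\bR^n)) \smallsetminus T_\delta$, which is compact and disjoint from $\gauss(W)$. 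The proposed Fell-basic open is $\mathcal{V} = \bigcap_i V_i^- \cap (X \smallsetminus K')^+$. For $W' \in \mathcal{V}$, disjointness of $\gauss(W')$ from $K'$ forces $W' \cap K$ into the tube $\exp_W(\cT)$ with tangent planes uniformly within $\delta$ of the horizontal; the implicit function theorem, applied in a finite cover of $W \cap K^+$ by normal-bundle trivialisations, then shows that $Q := \exp_W^{-1}(W' \cap \exp_W(\cT)) \subset NW$ is a submanifold on which the bundle projection $q$ is a local diffeomorphism, promoted to a covering map onto its image by properness over the relatively compact base. The $V_i^-$ conditions together with the density of $\{x_i\}$ give $q(Q) \supseteq W \cap K$, and standard estimates on local sections $f$ of $q$ yield $\|f(x)\| + \|\tau \circ (Df)(x)\| = O(\delta)$ uniformly, so choosing $\delta$ small places $W'$ in $(K, \epsilon)^{\ls}$.

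For the reverse direction, start with a Fell-basic open $\bigcap_i V_i^- \cap (X \smallsetminus K')^+$ of $\gauss(W)$, pick $x_i \in W$ with $(x_i, T_{x_i} W) \in V_i$, let $K \subset U$ be a compact set containing a neighbourhood of each $x_i$ together with the projection $\pi_U(K')$, and set $\rho := d(K', \gauss(W)) > 0$. For $\epsilon > 0$ small enough in terms of $\rho$ and the diameters of the $V_i$, the uniform bounds $\|f\|, \|\tau \circ Df\| < \epsilon$ on a $W' \in (K, \epsilon)^{\ls}$ ensure that every $(y, T_y W')$ with $y \in W' \cap \pi_U(K')$ lies within $\rho/2$ of $\gauss(W)$, hence outside $K'$; simultaneously, surjectivity of $q$ onto $W \cap K$ provides a point of $W'$ close to each $x_i$ with tangent plane close to $T_{x_i} W$, placing a point of $\gauss(W')$ inside each $V_i$.

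The main technical obstacle lies in the first direction: Fell-closeness only provides pointwise approximation of $\gauss(W)$ by $\gauss(W')$, and extracting a coherent covering $q \colon Q \to W$ requires a uniform application of the implicit function theorem in a compact finite atlas of the normal bundle. The crucial feature of Definition \ref{def:ls} that $Q$ is allowed to have multiple sheets over $W$ --- in contrast with the single global section demanded by the $\gs$-topology --- is exactly what accommodates the multi-graph phenomena illustrated in Figure \ref{figure:psi}, and this same flexibility underlies the parenthetical claim of the theorem that the differential Fell topology is strictly coarser than the Galatius--Randal-Williams topology.
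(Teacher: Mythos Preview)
Your proposal is correct and follows essentially the same two-direction strategy as the paper (Lemmas \ref{lemma:ls1} and \ref{lemma:ls2}): confine $W'\cap K$ to a tube with controlled tangent angles via $(-)^+$ conditions, force surjectivity of $q$ via $(-)^-$ conditions, and translate between the section norms and the metric on $U\times\Gr_d(\bR^n)$. The paper isolates that last translation as the explicit identity of Lemma \ref{lema2} (so your ``standard estimates'' and appeal to the implicit function theorem become exact equalities $\|f(x)\|=d_0(x,y)$ and $\|\tau\circ Df(x)\|=\tan d_1(T_xW,T_yW')$), and it encodes the confinement by three separate compacts $K_1,K_2,K_{3,\epsilon}$ rather than a single metric $\delta$-tube, but these are presentational differences only.
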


Recall that a \emph{pseudo-metric} on a set $X$ is a symmetric function $d\colon X\times X\to [0,\infty)$ satisfying the triangle inequality and such that $d(x,x)=0$ for all $x\in X$. The balls of a pseudo-metric on $X$ define a basis of a topology, and that topology is Hausdorff if and only if $d$ is a metric (that is, $d(x,y)\neq 0$ whenever $x\neq y$). If $(X,d)$ is a metric space and $f\colon Y\rightarrow X$ is a function from a set $Y$, then $d\circ(f\times f)$ is a pseudo-metric on $Y$, which is a metric if $f$ is injective. 

Let $F_0(\bR_+)$ be the set of non-decreasing functions from $\bR_+=[0,\infty)$ to itself that preserve $0$. Define
\begin{align*}\label{eq:q}
\graph\colon F_0(\bR_+) &\lra \CL(\bR_+\times\bR_+) \\
f &\longmapsto \{(x_1,x_2)\in \bR_+\times\bR_+\mid f(x_1^-)\leq x_2\leq f(x_1^+)\}
\end{align*}
Now let $W_r$ be the intersection of $W$ with the closed ball of radius $r$. Define
\begin{align*}
\nu\colon \psi(\bR^n)&\lra F_0(\bR_+)\\
W&\longmapsto (r\mapsto \vol(W_r))
\end{align*}
\begin{df} We write $\dvol$ for the restriction of the Hausdorff distance associated to the Fell topology in $\CL(\bR_+\times \bR_+)$ along $\graph\circ \nu$. Because $\nu$ is not injective, $\dvol$ is only a pseudo-distance.
\end{df}
\begin{df} Define the following metric in $\psi(\bR^n)$
\[\dpsi(W,W') = \dgrascomp_H(W,W') + \dvol(W,W').\]
\end{df}

\begin{theorem}\label{thm:gs} The distance $\dpsi$ is a metric for the space $\Psi(\bR^n)$.
\end{theorem}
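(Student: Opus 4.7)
The plan is to combine Theorem~\ref{thm:ls} with a volume-accounting argument. Theorem~\ref{thm:ls} tells us that $\dgrascomp_H$ induces the differential Fell topology $\Pr(\bR^n)$, which differs from $\Psi(\bR^n)$ precisely in that a neighbour $W'$ of $W$ need only be \emph{locally} a union of images of local sections of the normal bundle $NW$, with the associated covering $q\colon Q\to W$ allowed to have degree $\geq 2$. The extra term $\dvol$ will rule out higher-degree coverings by detecting the excess volume that they would produce.

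First, $\dpsi$ is a metric (not only a pseudo-metric): the Gauss map $\gauss$ is injective (a submanifold is determined by its underlying closed set), so $\dgrascomp_H$ is already a metric on $\psi(\bR^n)$, and therefore so is $\dpsi\geq\dgrascomp_H$. To show that $\dpsi$-convergence implies Galatius--Randal-Williams convergence, let $W_k\to W$ in $\dpsi$ and fix a basic neighbourhood $(K,\epsilon)^{\gs}$ of $W$. Since $\dgrascomp_H(W_k,W)\to 0$, Theorem~\ref{thm:ls} yields that for $k$ large $W_k\in(K,\epsilon/2)^{\ls}$, witnessed by coverings $q_k\colon Q_k\to W\cap K$. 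We claim that for $k$ large $q_k$ has degree one, so that its local sections can be glued into a global section of $NW$ giving $W_k\in(K,\epsilon)^{\gs}$. Otherwise, passing to a subsequence, there is a patch $V\subset W\cap K$ of positive volume over which $q_k$ has degree $\geq 2$. Since the local sections are $C^1$-bounded by $\epsilon$, their exponential images are pairwise disjoint (otherwise $W_k$ would fail to be embedded near the overlap) and the change-of-variables formula gives
\[\vol\bigl(\exp_W(q_k^{-1}(V))\bigr)\geq 2(1-C\epsilon)\vol(V),\]
where $C$ depends only on the geometry of the exponential on the relevant tubular neighbourhood. Choosing $r$ slightly larger than the radius of $V$ yields $\vol((W_k)_r)\geq \vol(W_r)+(1-C\epsilon)\vol(V)$, hence a horizontal gap of size $\geq (1-C\epsilon)\vol(V)$ between the graphs of $\nu(W)$ and $\nu(W_k)$, contradicting $\dvol(W_k,W)\to 0$ as soon as $\epsilon$ is small.

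Conversely, suppose $W_k\to W$ in the Galatius--Randal-Williams topology. Given $\eta>0$, we choose $R$ large and $\epsilon>0$ small so that every $W'\in(\overline{B_{R+1}},\epsilon)^{\gs}$ satisfies both $\dgrascomp_H(W',W)<\eta/2$ and $\dvol(W',W)<\eta/2$. The first bound is direct: inside $B_R$, $\gauss(W')$ and $\gauss(W)$ differ by $O(\epsilon)$ in both coordinates, while outside $B_R$ both lie in a small neighbourhood of the point at infinity in $\overline{\bR^n\times\Gr_d(\bR^n)}$. For the second bound, the change-of-variables formula gives $|\vol((W')_r)-\vol(W_r)|\leq C\epsilon\,\vol(W_r)$ for $r\leq R$, so the graphs of $\nu(W)$ and $\nu(W')$ are Hausdorff-close on $[0,R]^2$; for $r>R$ both graphs sit in a neighbourhood of the point at infinity of $\overline{\bR_+\times\bR_+}$, whose contribution is made arbitrarily small by taking $R$ large.

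The main obstacle is the quantitative volume lower bound used above: one must check that even when two sheets of a degree-$2$ covering are nearly parallel, their exponential images do not collapse to nearly coincident sets, so that the combined volume is close to $2\vol(V)$ rather than to $\vol(V)$. This rests on the embeddedness of $W_k$ together with the uniform $C^1$-smallness of the local sections: two such sections cannot cross, so their graphs in $NW$ remain pointwise distinct over $V$, and the exponential map is an immersion in a neighbourhood of the zero section of radius independent of $\epsilon$.
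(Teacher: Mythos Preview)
Your strategy is the same as the paper's: reduce via Theorem~\ref{thm:ls} to showing that the extra term $\dvol$ forces the local covering $q\colon Q\to W\cap K$ in Definition~\ref{def:ls} to have degree one, and do this by a change-of-variables volume estimate. The paper packages the same estimate slightly differently, first characterising the induced Fell topology on $F_0(\bR_+)$ by the subbasic sets $D^+$ (Lemma~\ref{lemma:subbasic}) and then proving in Lemma~\ref{lemma:ls} that a degree-$c$ covering pushes $\graphvol(W')$ into a $c$-scaled neighbourhood of $\graphvol(W)$; Lemmas~\ref{lemma:gs1} and~\ref{lemma:gs2} then read off the two inclusions of topologies. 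Your sequential argument with explicit Jacobian bounds is the same mechanism.

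There is, however, a genuine gap in your forward direction. From the degree-$\geq 2$ patch $V\subset W\cap K$ you assert
\[
\vol((W_k)_r)\ \geq\ \vol(W_r)+(1-C\epsilon)\vol(V),
\]
but this needs a lower bound on the volume of $(W_k)_r$ over \emph{all} of $W_r$, not just over $V$: the excess over $V$ could be offset by a deficit over $W_r\setminus V$. Surjectivity in Definition~\ref{def:ls} gives degree $\geq 1$ only over $W\cap K$, so you need $W_r\subset W\cap K$; since $K$ is the \emph{given} compact set and $r$ is dictated by the location of $V$ inside $K$, there is no reason for $B_r\subset K$. The fix is to first replace $(K,\epsilon)^{\gs}$ by a smaller basic neighbourhood with $K$ a closed ball (these are cofinal), and then run your argument; the paper does exactly this, taking $K$ to be a large disc in Lemma~\ref{lemma:ls}. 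Two smaller points: once $q_k$ has degree one you obtain a section of $NW$ only over $W\cap K$, and a word is needed on extending it to the global section required by Definition~\ref{df:1}; and in the converse direction your bound $|\vol((W')_r)-\vol(W_r)|\leq C\epsilon\,\vol(W_r)$ should really read $\vol(W_{r-\epsilon})(1-C\epsilon)\leq\vol((W')_r)\leq\vol(W_{r+\epsilon})(1+C\epsilon)$, since the section $f$ may cross the sphere $\partial B_r$ --- this is precisely why $\dvol$ is defined via $\graph$ rather than pointwise.
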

\begin{remark}The space $\psi(U)$ can be naturally endowed with a metric too, following the same method as above with
\[W_r = \left\{x\in W\left| \max\left\{\|x\|,\frac{1}{d(x,\bR^n\setminus U)}\right\}\leq r\right.\right\}.\]
\end{remark}

\section{ The space $\Pr(U)$ and the $\dgrascomp_H$-topology}\label{s:4}
In Lemmas \ref{lemma:ls1} and \ref{lemma:ls2} of this section we prove Theorem \ref{thm:ls}. We start setting up some conventions.

We denote by $d_0$ the Euclidean distance in $\bR^n$, by $d_1$ the distance on $\Gr_d(\bR^n)$ given by
\[d_1(L,L') = \max_{v\in S(L)}\min_{w\in S(L')}\{\mathrm{angle}(v,w)\} = \max_{w\in S(L')}\min_{v\in S(L)}\{\mathrm{angle}(v,w)\},\]
where $S(L)$ is the unit sphere in $L$, and by $d = d_0+d_1$ the distance in $\bR^n\times \Gr_d(\bR^n)$. If $f\colon L\to L'$ is a linear operator, then 
\[\|f\| = \max_{v\in S(L)}\{\|f(v)\|\}.\]

If $W\in \psi(U)$, we denote by $p\colon NW\to W$ the projection, which is covered by two bundle maps: its differential $Dp\colon T(NW)\to TW$ and the canonical bundle isomorphism
\[\xymatrix{
TNW \ar[r]^-\alpha\ar[d] & TW\oplus NW\ar[d] \\
NW\ar[r]^p & W.
}\]
The differential $Dp$ is the composition of $\alpha$ with the projection onto $TW$, and we write $\tau$ for the composition of $\alpha$ with the projection onto $NW$.

Recall that there is a function $\epsilon\colon W\to (0,\infty)$ such that the restriction
\[\exp^\epsilon_W\colon N^\epsilon W\longrightarrow U\]
 of the exponential map $\exp_W\colon NW\rightarrow U$ to the subspace $N^\epsilon W$ of vectors $v$ of length at most $\epsilon(p(v))$ is an embedding. In addition, the restriction of this map to each fibre 
\[\exp_{W,x}^\epsilon\colon N_x^\epsilon W\to U\]
is a radial isometry. We denote by $z\colon W\to NW$ the zero section, by $T^\epsilon$ the image of $N^\epsilon W$, and by $\pi$ the composition of $(\exp^{\epsilon}_W)^{-1}$ and $p$. Summarizing:
\[\xymatrix{
NW \ar @{}[r]|{\supset}\ar[d]_p & N^\epsilon W\ar[d]_{\exp_W^\epsilon}^\cong & \\
W\ar @{}[r]|{\subset} \ar @/^/[ur]^{z} &T^\epsilon\ar @{}[r]|{\subset} \ar @/^/[l]^{\pi} & U
}\]

%
%

\begin{lemma}\label{lema2} Let $A$ be an open subset of $W$, let $x\in A$ and let $f\colon A\to NW$ be a local section of $N W$. Let $A'=(\exp_W\circ f)(A)$ and let $y = (\exp_W\circ f)(x)$. Then 
\begin{align*}
\|f(x)\| &= d_0(x,y) \\
\|\tau\circ (Df)(x)\| &= \tan(d_1(T_xA,T_yA')).
\end{align*}
\end{lemma}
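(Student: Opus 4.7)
The first equality is immediate: in the flat ambient space $\bR^n$ the exponential map reduces to $\exp_W(p,v) = p + v$ for $v \in N_pW \subset \bR^n$, since straight lines are geodesics. Thus $y = x + f(x)$ and $d_0(x,y) = \|y-x\| = \|f(x)\|$.

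For the second equality, my plan is to realise $T_yA'$ as the graph of a linear map $L\colon T_xW \to N_xW$ and then invoke the elementary identity $\tan(d_1(T_xW,\mathrm{graph}(L))) = \|L\|$, which holds for the principal-angle metric $d_1$ defined earlier in this section. Writing $f(a) = (a,\tilde f(a))$ with $\tilde f(a) \in N_aW \subset \bR^n$, the composite $\exp_W\circ f$ becomes the map $a \mapsto a + \tilde f(a)$, whose differential at $x$ is the linear map $v \mapsto v + D\tilde f(x)(v)$ on $T_xA = T_xW$. I would then decompose $D\tilde f(x)(v) = Sv + \tau\circ Df(x)(v)$ along the orthogonal splitting $T_x\bR^n = T_xW \oplus N_xW$; that the $N_xW$-component is precisely $\tau\circ Df(x)(v)$ is the defining property of the canonical bundle isomorphism $\alpha\colon TNW \to TW \oplus NW$ arising from the projection of the ambient flat connection onto $NW$. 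Hence $T_yA' = \{(I+S)v + \tau\circ Df(x)(v)\colon v \in T_xW\}$, which, once $I+S$ is invertible, is the graph of $L := \tau\circ Df(x)\circ(I+S)^{-1}$ over $T_xW$; applying the angle formula to $L$ would then conclude the argument.

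The main obstacle is that the tangential part $S$ of $D\tilde f(x)$ is in general non-zero: it is a Weingarten-type correction that encodes the twisting of $N_aW$ as $a$ varies on $W$. For instance, for a round circle of radius $R$ in $\bR^2$ and a constant normal offset $c$, one finds $S = (c/R)\,\mathrm{Id}$ and hence $L = (R/(R+c))\,\tau\circ Df(x)$, differing from $\tau\circ Df(x)$ by a non-trivial factor. The equality in the lemma therefore requires this shape-operator correction to be gauged away, and I would handle this by working in Fermi coordinates along $W$ at $x$ (equivalently, in the exponential trivialisation of $NW$ based at $x$), in which the ambient decomposition of $D\tilde f(x)$ coincides at the basepoint with the connection decomposition and $S$ vanishes at $x$, so $L = \tau\circ Df(x)$ on the nose. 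Checking that this local gauge choice is compatible with the ambient definitions of $d_0$ and $d_1$, and that no hidden first-order terms reappear, is the step I expect to require the most care.
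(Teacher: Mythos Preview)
Your diagnosis of the shape-operator term $S$ is exactly right, but the Fermi-coordinate cure cannot work: both $\|\tau\circ Df(x)\|$ (which is $\|\nabla f(x)\|$ for the induced connection on $NW$) and $d_1(T_xA,T_yA')$ are coordinate-free invariants, so no choice of chart can remove their discrepancy. Your own circle example already shows this concretely: with $W$ the radius-$R$ circle and section $f(\theta)=g(\theta)\,e(\theta)$ one finds $\|\tau\circ Df\|=|g'|/R$ while $\tan d_1(T_xW,T_yA')=|g'|/(R+g)$, and these differ whenever $g\neq 0$. The second identity, read literally, therefore fails for curved $W$ unless $f(x)=0$.

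The paper follows a different route. It computes the angle entirely inside $T_{f(x)}NW\cong T_xW\oplus N_xW$, where the key identity $\tau^\perp\circ Df=Dp\circ Df=\id$ makes $Df(T_xA)$ \emph{exactly} the graph of $\tau\circ Df(x)$ over $T_xA$, with no $(I+S)^{-1}$ correction; this yields $d_1\bigl(T_xA,\,T_{f(x)}f(A)\bigr)=\arctan\|\tau\circ Df(x)\|$ on the nose. Only at the final line does the paper transfer to $\bR^n$ by applying $D\exp_W$, justified by ``$\exp_W$ is a radial isometry''. But a radial isometry need not preserve angles off the zero section, and your $S$ is precisely the failure of $D\exp_W$ to be an isometry there---so the obstruction you spotted resurfaces at that step, and the paper's one-line justification does not dispose of it either. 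In the later applications one always has $\|f(x)\|<\epsilon$ on the compact $K$, so the two quantities agree up to a factor $1+O(\epsilon)$ and the discrepancy is harmless for the paper's purposes; but as an exact equality the lemma needs $f(x)=0$ or $W$ totally geodesic at $x$, and neither your Fermi-coordinate argument nor the paper's radial-isometry remark closes that gap.
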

\begin{proof} The first equality follows because $\exp_W$ is a radial isometry. For the second, observe that if $\rho$ is a linear projection of Euclidean spaces and $\|v\|=1$, then
\[\|\rho(v)\| = \tan(\mathrm{angle}(v,\rho^\perp(v))).\]
Now,
\begin{align*}
&\|\tau\circ (Df)(x)\|= \max_{v\in S(T_xA)}{\|\tau\circ (Df)(x)(v)\|}= &&\\
&= \max_{v\in S(T_xA)}{\tan\left(\mathrm{angle}(Df(x)(v),\tau^\perp Df(x)(v))\right)} &\text{by the observation before}\\
&= \max_{v\in S(T_xA)}{\tan\left(\mathrm{angle}(Df(x)(v),v)\right)} & \tau^\perp (Df) = (Dp)(Df)=D\Id \\
&= \max_{w\in S(T_{f(x)}f(A))}\min_{v\in S(T_xA)}{\tan\left(\mathrm{angle}(w,v)\right)} &\text{because $\tau$ is a projection} \\
&= \tan\left(\max_{w\in S(T_{f(x)}f(A))}\min_{v\in S(T_xA)}{\mathrm{angle}(v,w)}\right) &\\
&= \tan\left(d_1(T_xA,T_{f(x)}f(A))\right) &\\
&= \tan\left(d_1(D\exp_W(T_xA),D\exp_W(T_{f(x)}f(A)))\right) &\text{as $\exp_W$ is a radial isometry}\\
&= \tan\left(d_1(T_xA,T_{y}A')\right).&\qedhere
\end{align*}
\end{proof}

\begin{lemma}\label{lemma:ls1} Let $W\in \psi(U)$ and let $(K,\epsilon)^{\ls}$ be a neighbourhood of $W$ in $\Pr_d(U)$. Then, there is a neighbourhood $V$ of $W$ in the differential Fell topology with $V\subset (K,\epsilon)^{\ls}$. 
\end{lemma}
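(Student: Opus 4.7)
The plan is to construct a differential Fell neighbourhood $V\ni W$ so that every $W'\in V$ satisfies the four conditions of $(K,\epsilon)^{\ls}$. The neighbourhood $V$ will be an intersection of three kinds of Fell subbasic opens: a single containment condition forcing $W'\cap K$ to lie in a small tubular neighbourhood of $W$, finitely many containment conditions forcing the tangent planes of $W'$ on $K$ to be close to those of $W$, and finitely many intersection conditions forcing $W'$ to meet small normal disks at points of a fine net in $W\cap K$.

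First I would choose a compact $K'\subset U$ containing a closed neighbourhood of $K$ in its interior, and $\epsilon_0\in(0,\epsilon)$ small enough that $\exp_W$ restricts to a tubular diffeomorphism $N^{\epsilon_0}(W\cap K')\to T^{\epsilon_0}\subset U$ and that, by Lemma~\ref{lema2} and uniform continuity of the Gauss map of $W$ on $W\cap K'$, any local section $f$ of $NW$ of norm at most $\epsilon_0/2$ whose image tangent plane is $\rho$-close to $TW$ (for a small $\rho>0$ to be chosen) satisfies $\|f(x)\|+\|\tau\circ Df(x)\|<\epsilon$. Next I would cover $W\cap K'$ by finitely many small open balls $B_i\subset\bR^n$ around points $x_i\in W\cap K'$, fine enough that the Gauss map of $W$ varies by at most $\rho/2$ on each $B_i\cap W$ and that the subset $\{x_i\in W\cap K\}$ intersects every connected component of $W\cap K$. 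With $G_i$ the open $\rho$-ball around $T_{x_i}W$ in $\Gr_d(\bR^n)$, define $V$ as the intersection of the Fell subbasic opens corresponding to
\begin{enumerate}
\item[(a)] $\gauss(W')\cap\bigl((K'\setminus T^{\epsilon_0/2})\times\Gr_d(\bR^n)\bigr)=\emptyset$;
\item[(b)] for each $i$, $\gauss(W')\cap\bigl((\overline{B_i}\cap K')\times(\Gr_d(\bR^n)\setminus G_i)\bigr)=\emptyset$;
\item[(c)] for each $x_i\in W\cap K$, $W'\cap\exp(N^{\epsilon_0/4}_{x_i}W)\neq\emptyset$.
\end{enumerate}
Conditions (a), (b) are $U^+$-conditions whose obstructions are products of compact sets, and (c) is a $U^-$-condition for an open set, so $V$ is a valid differential Fell neighbourhood, and one checks directly that $W\in V$.

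Given $W'\in V$, I would take $Q=(\exp_W^{\epsilon_0})^{-1}(W'\cap T^{\epsilon_0/2})\subset NW$ and set $q=p|_Q$. Condition (a) gives $\exp_W(Q)\cap K=W'\cap K$. Condition (b), combined with the choice of $\rho$ and Lemma~\ref{lema2}, gives the bound $\|f(x)\|+\|\tau\circ Df(x)\|<\epsilon$ for every local section $f$ of $q$. Condition (b) also makes the differential of $\pi|_{W'\cap K'}$ invertible, so $q$ is a local diffeomorphism.

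The main obstacle is to prove surjectivity $q(Q)\supset W\cap K$. I would argue by a clopen argument: let $A=q(Q)\cap(W\cap K)$. Since $q$ is a local diffeomorphism and the inclusion $K\subset\mathrm{int}(K')$ is wide enough that any point of $W'$ projecting close to $K$ under $\pi$ lies in $W'\cap K'\subset T^{\epsilon_0/2}$, the set $A$ is open in $W\cap K$. Closedness of $A$ in $W\cap K$ follows from compactness of $W'\cap K'$: a sequence $y_n\in W'\cap K'$ with $\pi(y_n)\to z\in W\cap K$ is bounded and has a subsequence limit $y\in W'\cap\overline{T^{\epsilon_0/2}}\cap K'$, and condition (a) forces $y\in T^{\epsilon_0/2}$, giving $\pi(y)=z\in A$. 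Finally, condition (c) places each $x_i\in W\cap K$ in $A$, and the density of the net ensures that the clopen $A$ meets every connected component of $W\cap K$, hence $A=W\cap K$ as desired.
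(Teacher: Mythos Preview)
Your approach mirrors the paper's: both build the Fell neighbourhood from (i) a containment condition forcing $W'$ into a compact piece of a thin tube around $W$, (ii) an angle condition making $q$ a local diffeomorphism and giving the $C^1$ bound via Lemma~\ref{lema2}, and (iii) an intersection condition guaranteeing surjectivity of $q$ onto $W\cap K$ through a clopen/component argument. The chief difference is that the paper imposes (ii) with a single compact set
\[
K_{3,\epsilon}=\{(x,L)\in T'\times\Gr_d(\bR^n)\mid d_1(L,T_{\pi(x)}W)\geq\arctan(\epsilon/2)\},
\]
defined continuously over the whole tubular piece $T'$, whereas you discretise with a finite cover by balls $B_i$. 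That is harmless in principle, but it makes your bookkeeping more delicate: for (b) to control $q$ on all of $Q$ you need $\exp_W(Q)=W'\cap T^{\epsilon_0/2}$ to lie in $\bigcup_i(\overline{B_i}\cap K')$, which is not automatic from your choices (points of the tube over $\partial(W\cap K')$ may fall outside $K'$ or outside every $B_i$). This is easily repaired by taking $\epsilon_0$ small enough that $T^{\epsilon_0}\subset K'\cap\bigcup_iB_i$, but it should be said.

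There is one genuine slip. Your condition (c) is \emph{not} a Fell $B^-$ condition: the set $\exp(N^{\epsilon_0/4}_{x_i}W)$ is an $(n-d)$-dimensional normal disk, hence not open in $U$, so ``$W'\cap\exp(N^{\epsilon_0/4}_{x_i}W)\neq\emptyset$'' is not open in the differential Fell topology and your $V$ is not a Fell neighbourhood as written. The paper's fix is to take, for each connected component of the tube, a connected \emph{open} subset $B_j\subset\bR^n$ contained in it, and impose $W'\cap B_j\neq\emptyset$. Your clopen argument survives this change: if $W'$ meets such a $B_j$, then $q(Q)$ contains a point of $\pi(B_j)$, which lies in the corresponding component of $W\cap K$, so $A$ still meets every component.
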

\begin{proof}
Let $T$ be a tubular neighbourhood contained in $\exp_W(N^{\epsilon/2}W)$. We start defining the following subsets:
\begin{itemize}
\item $T'$ is the closure of the relatively compact subset $\pi^{-1}\pi(T\cap K)$
\item $K'$ is the closure of the complement of $T'$ inside $K\cup T'$, which is compact.
\item $K_1 = K'\times \Gr_d(\bR^n)$, which is compact.
\item $K_2=\{(x,L)\in \bR^n\times \Gr_d(\bR^n)\mid x\in T', L\perp T_{\pi(x)}W\}$, which is compact.
\item $K_{3,\epsilon}=\{(x,L)\in \bR^n\times \Gr_d(\bR^n)\mid x\in T', d(L,T_{\pi(x)}W)\geq \arctan(\epsilon/2)\}$, which is compact. 
\item $A_i$ is the complement of $K_i$, for $i=1,2,3$. 
\item $\{B_j\}$ is a collection of connected open subsets of $T'$ such that each $B_j$ is connected and the inclusion $\bigcup_j {B_j} \to T'$ is surjective on components.
\end{itemize}
Define $V = A_1^+\cap A_2^+\cap A_{3,\epsilon}^+\cap B^-$ (observe that $K_2\subset K_{3,\epsilon}$, hence $A_{3,\epsilon}^+\subset A_2^+$, so we may remove $A_2^+$, but we keep it there to help the next explanation), and let $W'\in V$. We claim that $W'\in (K,\epsilon)^\ls$. To prove this, define $Q = \exp_W^{-1}(W'\cap T')$. Then,
\begin{itemize}
\item Because $W'\in A_1^+$, it follows that $W'\cap K = W'\cap T'$, hence  
\[\exp_W(Q)\cap K := W'\cap T'\cap K = W'\cap K.\]
\item Since $W'\in A_2^+$, it follows that the restriction of $p\colon NW\to W$ to $Q$ is a submersion onto its image, which is proper because $Q$ is compact.
\item Since $W'\in B^-$, it follows that this proper submersion has non-empty fibres over each component and therefore it is a surjective submersion.
\item $\|f(x)\| + \|\tau\circ (Df)(x)\|<\epsilon$ because by Lemma \ref{lema2}:
\begin{itemize}
\item since $Q\subset T'\subset T \subset \exp_W(N_{\epsilon/2}W)$, it follows that $\|f(x)\|<\epsilon/2$ for every local section $f$ of $q$,
\item since $W'\subset A_{3,\epsilon}^+$, 
\[\|\tau\circ(Df)(x)\| = \tan(d_1(T_xW,T_{(\exp_W\circ f)(x)}W'))<\tan(\arctan(\epsilon/2))<\epsilon/2.\qedhere\]
\end{itemize} 
\end{itemize}
\end{proof}


\begin{lemma}\label{lemma:ls2} Let $W\in \psi(U)$ and let $V$ be a subbasic neighbourhood of $W$ in the differential Fell topology. Then, there is a neighbourhood $(K,\epsilon)^{\ls}$ of $W$ in $\Pr_d(U)$ with $(K,\epsilon)^{\ls}\subset V$.
\end{lemma}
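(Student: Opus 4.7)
The plan is to reduce to the two kinds of subbasic opens in the Fell topology: either $V=\Omega^-$ for some open $\Omega\subset U\times\Gr_d(\bR^n)$ that meets $\gauss(W)$, or $V=A^+$ where $A=(U\times\Gr_d(\bR^n))\setminus C$ for some compact $C\subset U\times\Gr_d(\bR^n)$ disjoint from $\gauss(W)$. In both cases the decisive tool is Lemma~\ref{lema2}, which converts the analytic bound $\|f(x)\|+\|\tau\circ(Df)(x)\|<\epsilon$ from Definition~\ref{def:ls} into the geometric control $d_0(x,y)<\epsilon$ and $d_1(T_xW,T_yW')<\arctan\epsilon$ for any point $y$ of $W'$ reached from $x\in W$ by a local section of $q$.

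For $V=\Omega^-$, I would pick a point $x_0\in W$ with $(x_0,T_{x_0}W)\in\Omega$, choose $\delta>0$ so that the open $\delta$-ball around $(x_0,T_{x_0}W)$ in $\bR^n\times\Gr_d(\bR^n)$ lies inside $\Omega$, take $K$ to be any compact neighbourhood of $x_0$ in $U$, and fix $\epsilon$ small enough that $\epsilon+\arctan\epsilon<\delta$. For $W'\in(K,\epsilon)^{\ls}$, condition (i) of Definition~\ref{def:ls} forces $x_0\in q(Q)$, so there is a local section $f$ of $q$ defined near $x_0$; setting $y=\exp_W(f(x_0))\in W'$, Lemma~\ref{lema2} places $(y,T_yW')$ inside the $\delta$-ball, hence in $\Omega$, which shows $W'\in\Omega^-$.

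For $V=A^+$, the projection $C_0\subset \bR^n$ of $C$ is compact and contained in $U$; take $K\subset U$ to be any compact set containing $C_0$. Since $\gauss(W)$ is a closed subset of $U\times\Gr_d(\bR^n)$ and $C$ is a disjoint compact subset, the number $\eta:=d(C,\gauss(W))$ is strictly positive. Choose $\epsilon>0$ with $\epsilon+\arctan\epsilon<\eta$, and argue by contradiction: suppose $W'\in(K,\epsilon)^{\ls}$ and $\gauss(W')\cap C$ contained a point $(y,T_yW')$. Then $y\in C_0\subset K$, so by condition (iii) $y=\exp_W(v)$ for some $v\in Q$; by condition (ii) the covering map $q$ admits a local section $f$ with $f(p(v))=v$, and Lemma~\ref{lema2} yields a point $(p(v),T_{p(v)}W)\in\gauss(W)$ at distance strictly less than $\eta$ from $(y,T_yW')\in C$, contradicting the definition of $\eta$.

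The main obstacle I anticipate is the case $V=A^+$, because one needs to exploit the \emph{covering} structure in Definition~\ref{def:ls} in order to realise every point of $W'\cap K$ as $\exp_W\circ f(x)$ for some local section $f$, which is precisely what allows Lemma~\ref{lema2} to produce a nearby point of $\gauss(W)$. Once that bookkeeping is in place, the estimate $\epsilon+\arctan\epsilon<\eta$ closes the argument, and the $V=\Omega^-$ case is an easy mirror image requiring only a single local section near one chosen point.
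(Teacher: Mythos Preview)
Your proposal is correct and follows essentially the same two-case argument as the paper: for $A^+$ you project the compact $C$ to $\bR^n$ to get $K$, use positivity of $d(C,\gauss(W))$ to pick $\epsilon$, and invoke Lemma~\ref{lema2} on a local section through any hypothetical $y\in W'\cap K$; for $\Omega^-$ you pick one point $x_0$, a compact neighbourhood $K$, and a single local section over $x_0$. The only cosmetic differences are that the paper uses the sharper bound $\|f\|+\arctan(\|\tau\circ Df\|)\leq\|f\|+\|\tau\circ Df\|<\epsilon$ (so $\epsilon<\eta$ suffices, rather than your $\epsilon+\arctan\epsilon<\eta$), and defines $\epsilon$ as a minimum of pointwise half-distances $\tfrac12 d((x,T_xW),J)$ rather than the global $d(C,\gauss(W))$; neither affects the logic.
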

\begin{proof}
Suppose first that $N$ is a subbasic neighbourhood of $W$ of the form $A^+$, with $A\subset \bR^n\times \Gr_d(\bR^n)$ and the complement of $A$ a compact subset $J$. This means that $\g(W)\cap J = \emptyset$. Let $K$ be the image of the projection of $J\subset \bR^n\times \Gr_d(\bR^n)$ onto $\bR^n$. For each $x\in W\cap K$, define 
\[\epsilon_x = \frac{1}{2}d((x,T_xW),J),\]
and let $\epsilon$ be the minimum of all $\epsilon_x$. 

Now let $W'\in (K,\epsilon)^{\ls}$ and let $Q\subset NW$ be as in Definition \ref{def:ls}. If $y\in W'\cap K$ there is a section $f$ of $q\colon Q\to W$ whose target contains a neighbourhood of $y$. We have, using Lemma \ref{lema2}: 
\begin{align}\label{eq:110}
\begin{split}
d((x,T_xW),(y,T_yW')) &= d_0(x,y) + d_1(T_xW,T_yW') \\
&= \|f(x)\| + \arctan(\|\tau\circ (Df)(x)\|) \\
&\leq \|f(x)\| + \|\tau\circ (Df)(x)\| < \epsilon.
\end{split}
\end{align}
Now we can compute
\begin{align*}d((y,T_yW'),J)&\geq d((x,T_xW), J) - d((x,T_xW),(y,T_yW')) \\
&> d((x,T_xW), J) - \epsilon  \geq \frac{1}{2}d((x,T_xW), J)>0,\end{align*}
therefore $\g(W')\cap J=\emptyset$. 


Now, let $B^-$ be a subbasic neighbourhood of $W$ with $B\subset \bR^n\times \Gr_d(\bR^n)$ an open subset. Let $x\in W\cap \pi(B)$, let $\epsilon$ be such that the ball with centre $(x,T_xW)$ and radius $\epsilon$ is contained in $B$, and let $K$ be a compact neighbourhood of $x$ in $\bR^n$.

Let $W'\in (K,\epsilon)^\ls$, so that there exists a $Q\subset NW$ satisfying the conditions of Definition \ref{def:ls}. In particular, $q\colon Q\to W\cap K$ is surjective, so we may pick a $y\in q^{-1}(x)$. But as in \eqref{eq:110}, we have $d((y,T_yW'),(x,T_xW))<\epsilon$, so $(y,T_yW')\in B$ too, hence $W'\in B^-$.
\end{proof}


\section{ The space $\Psi(\bR^n)$ and the $\dpsi$-topology}\label{s:5}
In Lemmas \ref{lemma:gs1} and \ref{lemma:gs2} of this section we prove Theorem \ref{thm:gs}.
\begin{lemma}\label{lemma:subbasic} The subbasic elements $D^+$ (where $D$ is any open subset containing $(s,\infty)\times \bR_+$ for some $s\geq 0$) generate the topology in $F_0(\bR_+)$ given by the restriction of the Fell topology along the inclusion $\graph\colon F_0(\bR_+)\hookrightarrow \CL(\bR_+\times\bR_+)$. 
\end{lemma}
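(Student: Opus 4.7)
The plan is to show that each element of the standard Fell subbase for $\CL(\bR_+\times\bR_+)$, when restricted to $\graph(F_0(\bR_+))$, lies in the topology generated by the special $D^+$'s. That subbase consists of the sets $U^-$ for $U\subset\bR_+\times\bR_+$ open, together with the sets $U^+$ for $U$ the complement of a compact set. Sets of the second kind are immediate: if $K\subset[0,R]\times[0,R]$ is compact, then $U:=(\bR_+\times\bR_+)\setminus K$ already contains $(R,\infty)\times\bR_+$, so $U^+$ is itself one of the allowed $D^+$'s.

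For sets of the first kind, I would write $U$ as a union of open rectangles $R_i=(a_i,b_i)\times(c_i,d_i)$, so that $U^-=\bigcup_i R_i^-$, and reduce to a single rectangle $R=(a,b)\times(c,d)$. The key step is the monotonicity characterization
\[\graph(f)\cap R\neq\emptyset \iff f(a^+)<d \text{ and } f(b^-)>c,\]
whose ``$\Rightarrow$'' is immediate from monotonicity and whose ``$\Leftarrow$'' finds either a continuity point $x\in(a,b)$ with $c<f(x)<d$ or a jump of $f$ inside $(a,b)$ that crosses the horizontal strip $\bR_+\times(c,d)$.

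I would then realize each one-sided condition by an explicit special $D^+$. For $f(a^+)<d$, take
\[D_1=\bigl([0,a+\eta)\times[0,d)\bigr)\cup\bigl((a,\infty)\times\bR_+\bigr),\]
which is open and contains $(a,\infty)\times\bR_+$; the only non-trivial fibre is over $x=a$, where the condition $\{a\}\times[f(a^-),f(a^+)]\subset[0,a+\eta)\times[0,d)$ is equivalent to $f(a^+)<d$. For $f(b^-)>c$, take
\[D_2=(\bR_+\times\bR_+)\setminus\bigl([b,b+\eta]\times[0,c]\bigr),\]
which is open and contains $(b+\eta,\infty)\times\bR_+$; if $f(b^-)>c$ then monotonicity gives $f(x^-)\geq f(b^-)>c$ for every $x\geq b$, so the graph avoids the forbidden rectangle, while if $f(b^-)\leq c$ then $(b,f(b^-))\in\graph(f)$ lies in the forbidden rectangle. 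Combining these yields $R^-\cap\graph(F_0(\bR_+))=D_1^+\cap D_2^+\cap\graph(F_0(\bR_+))$, so the special $D^+$'s generate the subspace Fell topology. The main obstacle is keeping the one-sided limits and possible jumps straight in the monotonicity characterization; once that bookkeeping is done, the explicit choices of $D_1$ and $D_2$ make everything routine.
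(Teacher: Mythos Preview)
Your argument establishes only one of the two required containments. You show that every Fell-subbasic set $U^-$ or $(K^c)^+$, restricted to $\graph(F_0(\bR_+))$, lies in the topology generated by the special $D^+$'s; this proves that the special topology is \emph{at least as fine} as the restricted Fell topology. But you never verify the reverse containment, namely that each special $D^+$ is itself open in the restricted Fell topology. This is not automatic: if $D$ is an open set containing $(s,\infty)\times\bR_+$, its complement lies in the unbounded strip $[0,s]\times\bR_+$ and need not be compact, so $D^+$ is not a Fell-subbasic set in $\CL(\bR_+\times\bR_+)$. Your observation that every compact $K$ sits in some $[0,R]^2$ shows that every Fell $(K^c)^+$ is a special $D^+$, but the converse implication is what is missing.

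The paper supplies exactly this missing step. Given $f$ with $\graph(f)\subset D$, one chooses $n>\max_{r\in[0,s]}f(r)$ and replaces $D$ by a set $A$ whose complement is compact (contained in $[0,s]\times[0,n+1]$), arranged so that $\graph(f)\subset A$ and so that the barrier $[0,s]\times[n,n+1]$, removed from $A$, forces any monotone $g$ with $\graph(g)\subset A$ to stay below height $n$ over $[0,s]$ and hence to lie in $D$. This yields a Fell-open neighbourhood $A^+$ of $f$ inside $D^+$. Without an argument of this kind your proof is incomplete: the two topologies could a priori differ, with the special one strictly finer. Incidentally, for the direction you do treat, your rectangle-by-rectangle reduction and the explicit $D_1,D_2$ are correct and give a different (more computational) route than the paper's slope-$(-1)$ line argument; both are fine once the missing direction is added.
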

\begin{proof}
First we show that this new topology is finer than the one induced by the Fell topology. 

Let $D$ be an open subset of $\bR_+\times \bR_+$ that intersects non-trivially $\graph(f)$, i.e., $\graph(f)\in D^-$. Since $A\subset B$ implies that $A^-\subset B^-$, we may assume that $D$ is bounded. 

If $0\in D$, then $D^-$ is the whole space. 

If $0\notin D$, take $f$ with $\graph(f)\in D^-$. Let $x\in \graph(f)\cap D$, and let $\ell$ be the line with slope $-1$ that goes through $x$. Let $A,B$ be the two components of $\bR_+\times\bR_+ \setminus \ell$, with $A$ the component that contains $0$. Then $A\cup B\cup D$ is a neighbourhood of $\graph(f)$ (here it is important that $f$ is non-decreasing) and its complement is compact. We will prove that $(A\cup B\cup D)^+\subset D^-$. 

Suppose that $\graph(g)\in (A\cup B\cup D)^+$, i.e., $\graph(g)\subset A\cup B\cup D$. Then:
\begin{itemize}
\item $\graph(g)\cap A \neq \emptyset$ because both contain the origin.
\item $\graph(g)\cap B \neq \emptyset$ because $\graph(g)$ is unbounded and both $A$ and $D$ are bounded. 
\end{itemize}
On the other hand, $\graph(g)\not\subset A\cup B$, because being $A$ and $B$ disjoint and $\graph(g)$ connected, it would imply that either $\graph(g)\subset A$ or $\graph(g)\subset B$, which we have proven to be false. As a consequence, $\graph(g)\cap D\neq \emptyset$, so $\graph(g)\in D^-$.

Second, we show that this new topology is coarser than the one induced by the Fell topology.

Let $D$ be an open subset of $\bR_+\times \bR_+$ containing $(s,\infty)\times \bR_+$ for some $s\geq 0$. Take $f$ such that $\graph(f)\in D^+$. Let $n>\max_{r\in [0,s]}\{f(r)\}$. Let $$A=(D\cup (n+1,\infty))\setminus [0,s]\times [n,n+1].$$ 
Then $A$ is the complement of a compact subset, hence $A^+$ is a neighbourhood of $f$ in the Fell topology which is contained in $D^+$. 
\end{proof}

\begin{lemma}\label{lemma:ls} Let $W\in \psi(U)$, let $s>0$, 
and let $V\subset [0,\infty)\times \bR$ be a neighbourhood of the graph $\graphvol(W)$ that constains $(s,\infty)\times \bR_+$. Then there exists a neighbourhood $(K',\epsilon')^\ls$ of $W$
, such that if $W'\in (K',\epsilon')^\ls$, then:
\begin{align*}
\graphvol(W) := \graph(r\mapsto \vol(W_r))  \in (c\cdot V)^+.
\end{align*}
\end{lemma}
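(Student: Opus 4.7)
The plan is to choose $K'$ large enough that $W' \cap \overline{B}_s \subset K'$ for every $W' \in (K',\epsilon')^\ls$, and $\epsilon'$ small enough that $r \mapsto \vol(W'_r)$ stays uniformly close to a nonnegative integer multiple of $r \mapsto \vol(W_r)$ on $[0,s]$. Since $V$ already contains the entire half-strip $(s,\infty)\times \bR_+$, only the behaviour on $[0,s]$ needs to be controlled.

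Concretely, I would first pick some $s_0 > s$ and a compact tubular neighbourhood $K'$ of $W_{s_0}$ of radius at most half the injectivity radius of $\exp_W$ on $W_{s_0}$. For $\epsilon' < (s_0-s)/2$, any $W' \in (K',\epsilon')^\ls$ satisfies $W' \cap \overline{B}_s \subset \exp_W(Q \cap p^{-1}(W_{s_0}))$, where $Q$ is the covering subspace of $NW$ provided by Definition~\ref{def:ls}. Lemma~\ref{lema2} gives that every local section $f$ of $q\colon Q \to W$ satisfies $\|f\|+\|\tau\circ Df\| < \epsilon'$, so the Jacobian of $\exp_W \circ f$ lies in $[1-C\epsilon', 1+C\epsilon']$ for a constant $C$ depending only on the second fundamental form of $W$ on $W_{s_0}$. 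Summing over sheets of $q$ and using that $r\mapsto \vol(W_r)$ is uniformly continuous on $[0,s_0]$, one obtains for $\epsilon'$ small enough,
\[\bigl|\vol(W'_r) - N_r\cdot \vol(W_r)\bigr| < \eta \quad\text{for all } r \in [0,s],\]
where $N_r$ is the sheet count of $q$ over $W_r$ and $\eta$ is a pre-assigned Hausdorff tolerance such that the $\eta$-neighbourhood of $\graphvol(W)\cap([0,s]\times\bR_+)$ sits inside $V$.

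Assembling these bounds, the graph of $r\mapsto \vol(W'_r)$ on $[0,s]$ lies in the $\eta$-neighbourhood of $c\cdot\graphvol(W)$ for a scalar $c$ bounding the multiplicities $N_r$, while for $r>s$ it lies automatically in $(s,\infty)\times \bR_+\subset V$; this will give the desired inclusion $\graphvol(W')\in (c\cdot V)^+$.

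\textbf{Main obstacle.} The covering $q$ may have non-trivial and a priori unbounded multiplicity, so $\vol(W'_r)$ need not be close to $\vol(W_r)$ but only to an integer multiple thereof. The role of the scalar $c$ is to absorb this factor, but its existence requires a uniform bound on the number of sheets of $q$ over $W_{s_0}$ as $W'$ varies in $(K',\epsilon')^\ls$. This can be secured by noting that $Q \subset N^{\epsilon'}W$ is a proper submanifold and its intersection with the compact preimage $p^{-1}(W_{s_0})$ therefore has finitely many components whose number is controlled by $\vol(K')$ and the minimal separation between sheets; coupling $K'$ and $\epsilon'$ carefully to make this bound uniform is the most delicate point of the argument.
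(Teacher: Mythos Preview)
Your overall strategy is the paper's: take $K'$ to contain a large ball and use the covering structure of $q$ together with Jacobian estimates to compare $\vol(W'_r)$ with a multiple of $\vol(W_r)$ on $[0,s]$. The paper splits this into two steps---first controlling $\vol(\pi(W'_r))$ against $\vol(W_r)$ via the sandwich $W_{r-\epsilon'}\subset\pi(W'_r)\subset W_{r+\epsilon'}$, then controlling $\vol(W'_r)$ against $c\cdot\vol(\pi(W'_r))$ via the Jacobian---but the ingredients are the same as yours.

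Your ``main obstacle'' is a misreading of the statement. The scalar $c$ is not a universal constant to be fixed once for every $W'$ in the neighbourhood; it is the number of sheets of the covering $q\colon Q\to W\cap K'$ attached to the \emph{given} $W'$ (this is exactly how the lemma is invoked in Lemma~\ref{lemma:gs1}). No uniform bound on $c$ over all $W'$ is needed. Furthermore, because $c\cdot V$ scales the second coordinate by $c$, the vertical tolerance scales with it: if the $\delta$-tube about $\graphvol(W)$ lies in $V$, then the $c\delta$-tube about $c\cdot\graphvol(W)$ lies in $c\cdot V$. Hence what you actually need is $\bigl|\vol(W'_r)/c - \vol(W_r)\bigr|<\delta$, and the Jacobian argument gives precisely this with an error independent of $c$: each of the $c$ sheets contributes volume in $[(1-C\epsilon')\vol(\pi(W'_r)),(1+C\epsilon')\vol(\pi(W'_r))]$, so their average does too. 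Drop the attempt to bound the sheet count and your argument goes through. (The one residual subtlety---also implicit in the paper---is that the sheet count is only locally constant on $W\cap K'$; a single $c$ is unambiguous when the relevant piece of $W$ is connected, which suffices for the application.)
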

\begin{proof} As a first step in the proof, we take $V$ a neighbourhood of the graph $\graphvol(W)$ that contains $(s+1,\infty)\times \bR_+$ and such that $V'$ is a neighbourhood of its closure. We will produce a neighbourhood $(K,\epsilon)^\ls$ of $W$ such that if $W'\in (K,\epsilon)^\ls$, then
\[\graph(r\mapsto \vol(\pi(W'_r)) \subset V.\]
Let $K$ be the disc of radius $s+2$, let $\delta$ be such that 
\[d(x,\graphvol(W))<\delta\Rightarrow x\in V.\] 
Let $\epsilon$ be so small that 
\[\vol(W_{r+\epsilon}\setminus W_r), \vol(W_r\setminus W_{r-\epsilon}) <\delta, \forall r<s.\]
If $W'\in (K,\epsilon)^\ls$, we have that for each $y\in W'$ the euclidean distance between $y$ and $\pi(y)$ is bounded by $\epsilon$. Therefore, $W_{r-\epsilon}\subset \pi(W_r)\subset W_{r+\epsilon}$ for all $r<s$, hence
\[\vol(W_r)-\delta < \vol(W_{r-\epsilon})<\vol(\pi(W'_r))<\vol(W_{r+\epsilon})<\vol(W_r)+\delta\]
and so $\graphvol(\pi(W'_r))\subset V$. 

For the second step, take $\mu$ to be the distance from the closure of $V$ to the complement of $V'$. Then we will provide a neighbourhood $(K'',\epsilon'')^\ls$ of $W$ such that if $W'\in (K'',\epsilon'')$, then
\begin{align*}
\left|\vol(W'_r) - c\cdot\vol(\pi(W'_r))\right|&<\mu, \forall r<s.
\end{align*}
Then, the proof will be finished taking $(K',\epsilon') = (K\cup K'',\min(\epsilon,\epsilon''))$: by the first step of this lemma, $\graphvol(\pi(W'_r))\subset V$ and by the second step, 
\[d(\graphvol(W'_r),V)<d(\graphvol(W'_r),\graphvol(\pi(W'_r)))<\mu,\]
 hence $\graphvol(W'_r)\subset V'$.

Let $\pi_r$ be the restriction of $\pi$ to $W'_r$ and let $W(r)\subset W$ be its image. Since $\pi_r\colon W_r\to W(r)$ is a local diffeomorphism for all $r<s$, we may choose, for each $x\in W'_r$, a local section $f_x$ of $\pi_r$. Writing $J_y$ for the Jacobian at point $y$,
\[\int_{x\in W'_r} 1dx = \int_{y\in W(r)} \sum_{\pi(x)=y}f_x^*(dy) = \int_{y\in W(r)}\sum_{\pi(x)=y}\det(J_y(f_x))dy\]
and the difference $\left|c\cdot\vol(W(r)) - \vol(W'_r) \right|$ is
\begin{equation*}\label{eq:209}
 \left|c\cdot \int_{y\in W(r)} 1dx - \int_{y\in W(r)} \sum_{\pi(x)=y}\det(J_y(f_x)) dx \right|,
\end{equation*}
which is bounded by $\max_{x\in W(r)} |\sum_{\pi(x)=y}\det(J_y(f_x))-1|\cdot\vol(W(r))$, 
that can be taken to be as small as desired if $\|\tau\circ(Df)\|$ is small enough for any section $f$ of $\pi_r$ and any point in its domain. But this quantity can be made arbitrarily small by taking $\epsilon'$ small enough.
\end{proof}

\begin{lemma}\label{lemma:gs1} If $W\in \psi(U)$ and $(K,\epsilon)^\gs$ is a neighbourhood of $W$ in $\Psi(U)$, there exists an open subset $D\subset \bR_+\times \bR_+$ and a neighbourhood $(K',\epsilon')^\ls$ of $W$ in $\Pr(U)$ such that if $W'\in (\graphvol)^{-1}(D^-)\cap (K',\epsilon')^\ls$, then the covering map $q\colon Q\to W\cap K$ has a single sheet, and therefore $W'\in (K,\epsilon)^\gs$.
\end{lemma}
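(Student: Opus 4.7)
The plan is to use the volume constraint encoded by $D^-$ to force the covering $q\colon Q\to W\cap K$ attached to any $W'\in(K',\epsilon')^\ls$ to have a single sheet, whereupon it provides the global section required by Definition~\ref{df:1}. The intuition, visible from Figure~\ref{ls}, is that $c$ sheets of $q$ multiply the volume of $W$ roughly by a factor of $c$; Lemma~\ref{lemma:ls} makes this precise by guaranteeing $\graphvol(W')\subset c\cdot V$ for any prescribed neighbourhood $V$ of $\graphvol(W)$.

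The case $W\cap K=\emptyset$ is trivial: any $(K,\epsilon')^\ls$-close $W'$ satisfies $W'\cap K=\emptyset$, so the zero section certifies $W'\in(K,\epsilon)^\gs$. Suppose then $W\cap K\neq\emptyset$, pick $r_0$ with $K\subset B_{r_0}$ and $v_0:=\vol(W_{r_0})>0$, and let $\eta\in(0,v_0/3)$. Define
\[D=(r_0-\eta,r_0+\eta)\times(v_0-\eta,v_0+\eta),\]
so $D$ meets $\graphvol(W)$ at $(r_0,v_0)$ but is disjoint from every dilation $c\cdot\graphvol(W)$ with $c\geq 2$: the second coordinate on such a dilation over $r\in(r_0-\eta,r_0+\eta)$ is at least $2(v_0-\eta)>v_0+\eta$.

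Choose next an open neighbourhood $V$ of $\graphvol(W)$ containing $(r_0+\eta,\infty)\times\bR_+$ and so thin that $2V\cap D=\emptyset$. Lemma~\ref{lemma:ls} then produces a neighbourhood $(K',\epsilon')^\ls$ of $W$ with $K'\supseteq K$ and $\epsilon'\leq\epsilon/2$ such that every $W'\in(K',\epsilon')^\ls$ has $\graphvol(W')\subset c\cdot V$, where $c$ is the number of sheets of the covering $q$ associated to $W'$. If in addition $W'\in(\graphvol)^{-1}(D^-)$, then $\graphvol(W')$ meets $D$; combining $\graphvol(W')\subset c\cdot V$ with $2V\cap D=\emptyset$ forces $c=1$.

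Once $c=1$, the subset $Q\subset NW$ is the graph of a local section $f$ of $NW\to W$ defined on an open neighbourhood $U_0\supseteq W\cap K$ and satisfying $\|f\|+\|\tau\circ Df\|<\epsilon'$. A standard cut-off $\rho\colon W\to[0,1]$ that equals $1$ on a neighbourhood of $W\cap K$ and is compactly supported in $U_0$ extends $f$ to a global section $\rho f$ of $NW\to W$; with $\epsilon'$ chosen small relative to a bound on $\|D\rho\|$ one preserves both conditions of Definition~\ref{df:1}, placing $W'\in(K,\epsilon)^\gs$. The principal technical obstacle is the coordinated choice of $D$ and $V$ securing the separation from the dilations $c\cdot V$ with $c\geq 2$, which hinges on the strict inequality $v_0>0$, together with the bookkeeping required for the bump-function extension.
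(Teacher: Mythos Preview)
Your argument follows essentially the same route as the paper: pick a point $(r_0,v_0)$ on $\graphvol(W)$ with $v_0>0$, surround it by a small open set $D$, feed a thin neighbourhood $V$ of $\graphvol(W)$ into Lemma~\ref{lemma:ls}, and use the separation between $D$ and the dilates $c\cdot V$ for $c\geq 2$ to force the sheet number to be one. Your explicit treatment of the extension from a single-sheeted $q$ to a \emph{global} section via a cut-off is a welcome addition that the paper suppresses.

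There is, however, a genuine gap in your separation step. You assert that over $r\in(r_0-\eta,r_0+\eta)$ the second coordinate of $c\cdot\graphvol(W)$ is at least $2(v_0-\eta)$, which amounts to claiming $\vol(W_r)\geq v_0-\eta$ for all such~$r$. But $r\mapsto\vol(W_r)$ is only non-decreasing, so for $r<r_0$ nothing prevents $\vol(W_r)$ from being far below $v_0-\eta$; indeed, if $\vol(W_\bullet)$ has a large jump at $r_0$ (say $W$ is a round sphere centred at the origin), then $\graphvol(W)$ contains the vertical segment $\{r_0\}\times[\vol(W_{r_0^-}),v_0]$, and for any neighbourhood $V$ of $\graphvol(W)$ the dilate $2V$ will meet $D$. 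The same obstruction blocks your later requirement that $V$ be ``so thin that $2V\cap D=\emptyset$''. The paper sidesteps this by choosing $r=\|x\|$ with $x$ \emph{not} the closest point of $W$ to the origin; you can repair your version just as easily by taking $r_0$ to be a continuity point of $r\mapsto\vol(W_r)$ (these are dense, since the function is monotone) and then shrinking $\eta$ so that $\vol(W_{(r_0-\eta)^+})>(v_0+\eta)/2$. With that adjustment your argument goes through, and you should also note that $2V\cap D=\emptyset$ then implies $cV\cap D=\emptyset$ for all $c\geq 2$, since over $(r_0-\eta,r_0+\eta)$ the set $V$ lies entirely above height $(v_0+\eta)/2$.
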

\begin{proof}
If $W=\emptyset$, observe that $(K,\epsilon)^\ls = (K,\epsilon)^\gs$. Otherwise, let $x\in K\cap W$ not the closest point to the origin, let $r=\|x\|$ and let $v = \vol(W)(r^+)>0$. Then $(r,v)\in \graphvol(W))$, hence $(r,v)\notin \graph\circ(c\cdot \nu(W))$ because $v>0$. Therefore, we can find a small open neighbourhood $A$ of $(r,v)$ and an open neighbourhood $B$ of $\graph\circ(2\cdot vol(W))$ such that $(s,\infty)\times \bR_+\subset B$ and $A\cap B = \emptyset$. Observe that $\frac{1}{2} B$ is a neighbourhood of $\graphvol(W))$. 

Let $(K',\epsilon')$ be as in Lemma \ref{lemma:ls} for the neighbourhood $\frac{1}{2} B$ of $W$, and define $D = A\cap \frac{1}{2}B$. If $W'\in (K',\epsilon')^\ls$, and its corresponding $q\colon Q\to W\cap K$ has $c$ sheets, then $\graphvol(W')\subset \frac{c}{2}B$, which is disjoint from $\frac{1}{2}B$ unless $c=1$. Therefore, if $W'\in (K',\epsilon')^\ls\cap \graphvol^{-1}(D^-)$ then $q$ is single-sheeted and $W'\in (K',\epsilon')^{\gs}$
\end{proof}

\begin{lemma}\label{lemma:gs2} If $W\in \psi(U)$, and $D^+$ is a subbasic open neighbourhood of $\graphvol(W)$ in $F_0(\bR_+)$ as in Lemma \ref{lemma:subbasic}, then there exists a neighbourhood $(K,\epsilon)^\gs$ of $W$ in $\Psi(U)$ such that $(K,\epsilon)^\gs\subset(\graphvol)^{-1}(D^{+})$.
\end{lemma}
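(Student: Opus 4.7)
The plan is to reduce the statement to Lemma \ref{lemma:ls}, exploiting the fact that a global normal section witnessing membership in $(K,\epsilon)^\gs$ gives rise to a single-sheeted covering witnessing membership in $(K,\epsilon)^\ls$.

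First, I would verify the set-theoretic inclusion $(K,\epsilon)^\gs \subset (K,\epsilon)^\ls$. If $W' \in (K,\epsilon)^\gs$ via a global section $f\colon W \to NW$, then $Q := f(W) \subset NW$ with projection $q := p|_Q$ satisfies the four conditions of Definition \ref{def:ls}: $q$ is the inverse of $f$ and hence a diffeomorphism, so in particular a single-sheeted covering map hitting every point of $W \cap K$; the identity $\exp_W(Q) \cap K = W' \cap K$ is exactly condition (\ref{it:str}) of Definition \ref{df:1}; and the derivative bound is condition (\ref{it:der}) of Definition \ref{df:1} applied to the only local section of $q$, namely $f$ itself.

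Next, since $\graphvol(W) \subset D$ and $D$ contains $(s,\infty) \times \bR_+$ for some $s \geq 0$, I would apply Lemma \ref{lemma:ls} with $V = D$ (after, if necessary, replacing $s$ by any strictly positive value so as to meet the positivity hypothesis of that lemma). This yields a neighbourhood $(K',\epsilon')^\ls$ of $W$ in $\Pr(U)$ such that every $W' \in (K',\epsilon')^\ls$ satisfies $\graphvol(W') \subset c \cdot D$, where $c$ is the number of sheets of the associated covering map.

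Finally, I would set $(K,\epsilon) := (K',\epsilon')$. For any $W' \in (K,\epsilon)^\gs$, the first step shows $W' \in (K,\epsilon)^\ls$ with a covering of $c = 1$ sheets; therefore $\graphvol(W') \subset 1 \cdot D = D$, i.e., $W' \in \graphvol^{-1}(D^+)$, as required. The main obstacle is entirely absorbed by Lemma \ref{lemma:ls}; the present lemma is essentially its corollary in the single-sheet case, and the only bookkeeping is the passage from $\gs$-data to single-sheeted $\ls$-data carried out in the first step.
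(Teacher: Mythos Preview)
Your proposal is correct and follows essentially the same route as the paper: apply Lemma~\ref{lemma:ls} with $V=D$ to obtain $(K',\epsilon')^{\ls}$, observe that $(K',\epsilon')^{\gs}\subset (K',\epsilon')^{\ls}$ with the witnessing covering being single-sheeted, and conclude that $c=1$ forces $\graphvol(W')\subset D$. The paper's proof is just a terser version of exactly this argument; your explicit verification of the inclusion $(K,\epsilon)^{\gs}\subset (K,\epsilon)^{\ls}$ and the remark about ensuring $s>0$ are welcome bits of care that the paper leaves implicit.
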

\begin{proof}
Let $D$ be an open subset of $\bR_+\times \bR_+$ containing $(s,\infty)\times \bR_+$, that is also a neighbourhood of $\graphvol(W)$, i.e., $\graphvol(W)\in D^+$. First we take a neighbourhood $(K,\epsilon)^\ls$ of $W$ as in Lemma \ref{lemma:ls} for the neighbourhood $D$ of $\graphvol(W)$. Then, if $W'\in (K,\epsilon)^\gs$, then $W'\in (K,\epsilon)^\ls$ and its corresponding covering map $q\colon Q\to W\cap K$ has a single sheet, so in the mentioned lemma we have $c=1$, so 
\[\graphvol(W')\subset D,\]
hence $\graphvol(W')\in D^+$.
\end{proof}

\bibliographystyle{amsalpha}
\bibliography{biblio-article}

\def\cprime{$'$} \def\cprime{$'$}
\providecommand{\bysame}{\leavevmode\hbox to3em{\hrulefill}\thinspace}
\providecommand{\MR}{\relax\ifhmode\unskip\space\fi MR }
\providecommand{\MRhref}[2]{%
  \href{http://www.ams.org/mathscinet-getitem?mr=#1}{#2}
}
\providecommand{\href}[2]{#2}
\begin{thebibliography}{GRW10}

\bibitem[BM11]{Bokstedt-Madsen}
Marcel {B{\"o}kstedt} and Ib~{Madsen}, \emph{{The cobordism category and
  Waldhausen's K-theory}}, arXiv: 1102.4155v1 (2011).

\bibitem[Can14]{cantero:merging}
Federico Cantero, \emph{The space of merging submanifolds in $\mathbb{R}^n$},
  arXiv:1412.4938, 2014.

\bibitem[Fel62]{Fell}
J.~M.~G. Fell, \emph{A {H}ausdorff topology for the closed subsets of a locally
  compact non-{H}ausdorff space}, Proc. Amer. Math. Soc. \textbf{13} (1962),
  472--476.

\bibitem[GRW10]{GR-W}
S{\o}ren Galatius and Oscar Randal-Williams, \emph{Monoids of moduli spaces of
  manifolds}, Geom. Topol. \textbf{14} (2010), no.~3, 1243--1302.

\bibitem[Man89]{Mandelkern}
Mark Mandelkern, \emph{Metrization of the one-point compactification}, Proc.
  Amer. Math. Soc. \textbf{107} (1989), no.~4, 1111--1115.

\bibitem[Mic51]{Michael}
Ernest Michael, \emph{Topologies on spaces of subsets}, Trans. Amer. Math. Soc.
  \textbf{71} (1951), 152--182. \MR{0042109 (13,54f)}

\bibitem[Mun75]{Munkres}
James~R. Munkres, \emph{Topology: a first course}, Prentice-Hall, Inc.,
  Englewood Cliffs, N.J., 1975.

\bibitem[Pet]{metric-mathoverflow}
Anton Petrunin, \emph{{M}etric on one-point compactification}, MathOverflow,
  \url{mathoverflow.net/questions/18541} (version: 2011-04-08).

\bibitem[RW11]{R-WEmbedded}
Oscar Randal-Williams, \emph{Embedded cobordism categories and spaces of
  submanifolds}, Int. Math. Res. Not. IMRN (2011), no.~3, 572--608.

\end{thebibliography}

\end{document}